\newtheorem{proposition}{Proposition}
\newtheorem{corollary}[proposition]{Corollary}
\newtheorem{lemma}[proposition]{Lemma}
\newtheorem{theorem}[proposition]{Theorem}
\newtheorem*{conjecture*}{Conjecture}
\newtheorem*{theorem*}{Theorem}
\newtheorem*{corollary*}{Corollary}
\newtheorem*{proposition*}{Proposition}
\newtheorem*{lemma*}{Lemma}
\theoremstyle{definition}
\newtheorem{definition}[proposition]{Definition}
\newtheorem{construction}[proposition]{Construction}
\newtheorem*{definition*}{Definition}
\newtheorem*{construction*}{Construction}
\theoremstyle{remark}
\newtheorem{remark}[proposition]{Remark}
\newtheorem{example}[proposition]{Example}
\newtheorem*{example*}{Example}
\newcommand{\id}{\operatorname{id}}
\newcommand{\Z}{\mathbb{Z}}
\let\scr=\mathcal
\let\bb=\mathbb
\newcommand{\Gm}{{\mathbb{G}_m}}
\newcommand{\Gmp}[1]{{\mathbb{G}_m^{\wedge #1}}}
\def\A{\bb A}
\def\P{\bb P}
\newcommand{\1}{\mathbbm{1}}
\newcommand{\eff}{{\text{eff}}}
\newcommand{\SH}{\mathcal{SH}}
\newcommand{\Spc}{\mathrm{Spc}}
\DeclareMathOperator*{\colim}{colim}
\let\lim=\relax
\DeclareMathOperator*{\lim}{lim}
\def\Spc{\mathcal{S}\mathrm{pc}{}}
\def\Fun{\mathrm{Fun}}
\newcommand{\wequi}{\simeq}
\def\adj{\leftrightarrows}
\DeclareRobustCommand{\ul}{\underline}
\newcommand{\heart}{\heartsuit}
\newcommand{\tr}{\mathrm{tr}}
\newcommand{\iHom}{\ul{\operatorname{Hom}}}
\newcommand{\HI}{\mathbf{HI}}
\let\cat=\mathrm
\def\Sm{{\cat{S}\mathrm{m}}}
\numberwithin{proposition}{section}
\newcommand{\NB}[1]{\todo[color=gray!40]{#1}}
\newcommand{\TODO}[1]{\todo[color=red]{#1}}
\newcommand{\tom}[1]{\todo[color=green]{#1}}
\newcommand{\mura}[1]{\todo[color=yellow]{#1}}
\newcommand{\NB}[1]{}
\newcommand{\TODO}[1]{}
\newcommand{\tom}[1]{}
\newcommand{\mura}[1]{}
\renewcommand{\todo}[1]{}
\newcommand{\lra}[1]{\langle #1 \rangle}
\newcommand{\SHS}{\mathcal{SH}^{S^1}\!}
\newcommand{\ord}{\mathrm{ord}}
\def\ph{\mathord-}
\title{The zeroth $\P^1$-stable homotopy sheaf of a motivic space}
\date{\today}
\author{Tom Bachmann}
\email{tom.bachmann@zoho.com}
\begin{document}
\maketitle

\begin{abstract}
We establish a kind of ``degree zero Freudenthal $\Gm$-suspension theorem'' in motivic homotopy theory.
From this we deduce results about the conservativity of the $\P^1$-stabilization functor.

In order to establish these results, we show how to compute certain pullbacks in the cohomology of a strictly homotopy invariant sheaf in terms of the Rost--Schmid complex.
This establishes the main conjecture of \cite{bachmann-moving}, which easily implies the aforementioned results.
\end{abstract}

\tableofcontents

\section{Introduction}
After recalling some preliminaries in \S\ref{sec:preliminaries}, this article has two main sections of very different flavor.
In \S\ref{sec:formula} we establish a technical result about Rost--Schmid complexes of strictly homotopy invariant sheaves.
Then in \S\ref{sec:app} we draw applications to the stabilization problem in motivic homotopy theory.
We now describe these two main sections in reverse order, and then we sketch their relation.
For more background and motivation, the reader may wish to consult the introduction of \cite{bachmann-moving}.

\subsection{$\P^1$-stabilization in motivic homotopy theory}
Motivic homotopy theory is the universal homotopy theory of smooth algebraic varieties, say over a field $k$.
It is built by freely adjoining homotopy colimits to the category of smooth $k$-varieties, and then enforcing Nisnevich descent and making $\A^1$ contractible \cite{A1-homotopy-theory}.
Write $\Spc(k)_*$ for the pointed version of this theory.\footnote{We think of this as an $\infty$-category, but no information will be lost for the purposes of this introduction by just considering its homotopy $1$-category.}
This is a symmetric monoidal category (the monoidal operation being given by the smash product), and every pointed smooth variety defines an object in it.
Given a pointed motivic space $\scr X \in \Spc(k)_*$, the classical \emph{homotopy groups} upgrade to \emph{homotopy sheaves}\footnote{I.e. Nisnevich sheaves on the site of smooth $k$-varieties.} $\ul{\pi}_i(\scr X)$.

The Riemann sphere $\P^1 := (\P^1, 1) \in \Spc(k)_*$ plays a similar role to the ordinary sphere in classical topology.
\emph{Stable} motivic homotopy theory is concerned with the category obtained by making $\Sigma_{\P^1} := \wedge \P^1$ into an equivalence.
It is this context in which algebraic cycles and motivic cohomology naturally appear.
We can take a more pedestrian approach.
The functor $\Sigma_{\P^1}$ has a right adjoint $\Omega_{\P^1}$, and there is a directed diagram of endofunctors of $\Spc(k)_*$ \[ \id \to \Omega_{\P^1} \Sigma_{\P^1} =: Q_1 \to \Omega_{\P^1}^2 \Sigma_{\P^1}^2 =: Q_2 \to \dots \to \Omega_{\P^1}^n \Sigma_{\P^1}^n =: Q_n \to \dots; \] denote by $Q$ its homotopy colimit.
Then $Q\scr X$ is the \emph{$\P^1$-stabilization of $\scr X$}, and the homotopy sheaves of $Q\scr X$ are called the $\P^1$-stable homotopy sheaves of $\scr X$.

A simple form of our main application of our technical result is as follows.
It is reminiscent of the fact that for an ordinary space $X$, the sequence of sets $\{\pi_0 \Omega^i \Sigma^i X\}_{i \ge 0}$ is given by $\pi_0 X$, $F \pi_0 X$, $\Z(\pi_0 X)$, $\Z(\pi_0 X)$, \dots, where for a pointed set $A$, $FA$ denotes the free group on $A$ (with identity given by the base point), and $\Z(A)$ denotes the free abelian group on $A$ (with $0$ given by the base point).
\begin{theorem} \label{thm:intro-main}
Let $k$ be a perfect field and $n \ge 3$ (if $char(k) = 0$, $n=2$ is also allowed).
Then for $\scr X \in \Spc(k)_*$, the canonical map \[ \ul{\pi}_0 Q_n \scr X \to \ul{\pi}_0 Q \scr X \] is an isomorphism.
\end{theorem}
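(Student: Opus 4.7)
The plan is to factor $\P^1$-stabilization through $S^1$- and $\Gm$-stabilization, exploiting the motivic equivalence $\P^1 \simeq S^1 \wedge \Gm$ in $\Spc(k)_*$. Writing $Q^{S^1}_n := \Omega_{S^1}^n \Sigma_{S^1}^n$, one has $Q_n \scr X \simeq \Omega_\Gm^n Q^{S^1}_n \Sigma_\Gm^n \scr X$, and I would show that, on $\ul\pi_0$, both the $S^1$- and the $\Gm$-stabilization individually converge by stage $n=3$ (respectively $n=2$ in characteristic zero), after which the combined statement follows from compatibility of the two adjunctions.

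For the $S^1$-direction I would invoke a motivic analogue of the classical fact that $\pi_0 \Omega^2 \Sigma^2 X$ computes the free abelian group on a pointed set. Concretely, Morel's strict $\A^1$-invariance theorem together with ordinary Freudenthal implies that for any pointed motivic space $\scr Y$ and any $n \ge 2$, the map $\ul\pi_0 Q^{S^1}_n \scr Y \to \ul\pi_0 Q^{S^1}_\infty \scr Y$ is an isomorphism, and the target is a strictly $\A^1$-invariant sheaf of abelian groups. Applied to $\scr Y = \Sigma_\Gm^n \scr X$, this reduces Theorem~\ref{thm:intro-main} to a purely $\Gm$-stabilization statement about strictly $\A^1$-invariant sheaves.

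That remaining statement is precisely the ``degree zero Freudenthal $\Gm$-suspension theorem'' highlighted in the abstract, and is the main unstable consequence of the Rost--Schmid pullback formula established in \S\ref{sec:formula} (equivalently, the main conjecture of \cite{bachmann-moving}). It says that for the relevant strictly $\A^1$-invariant sheaf $F$ arising from the $S^1$-stabilized data, the canonical map $\ul\pi_0 \Omega_\Gm^n \Sigma_\Gm^n F \to \ul\pi_0 \Omega_\Gm^\infty \Sigma_\Gm^\infty F$ is already an isomorphism at finite stage. The characteristic dichotomy should trace back to the availability of resolution of singularities in characteristic zero, which sharpens by one step the vanishing range of the relevant terms in the Rost--Schmid complex and so allows $n=2$ in place of $n=3$.

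The hard part is of course this $\Gm$-Freudenthal statement itself, i.e.\ the Rost--Schmid pullback computation carried out in \S\ref{sec:formula}; this is where the substantive work of the paper lies. Given that technical input, the remaining assembly is routine: compatibility of the $S^1$- and $\Gm$-adjunctions, commutation of $\ul\pi_0$ with the filtered colimit defining $Q$, and standard manipulations of $\A^1$-localization and Nisnevich sheafification.
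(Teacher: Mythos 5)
Your proposal matches the paper's argument: the paper likewise proves the theorem by combining Morel's $S^1$-stability/Hurewicz theorem (to reduce to the level of motivic $S^1$-spectra) with the degree-zero $\Gm$-Freudenthal statement of Corollary \ref{cor:freudenthal}, which in turn rests on the Rost--Schmid pullback formula of \S\ref{sec:formula}. One small correction to an aside: the $n=2$ improvement in characteristic zero comes from the hypothesis $T_2(k)$ holding there by \cite[Theorem 5.19]{bachmann-moving} (extendability of $\A^1$-transfers to framed transfers), not from a sharpened vanishing range in the Rost--Schmid complex via resolution of singularities.
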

\begin{proof}
This is an immediate consequence of Corollary \ref{cor:freudenthal} and e.g. Morel's Hurewicz theorem \cite[Theorem 6.37]{A1-alg-top}.
\end{proof}

\begin{example}
Morel's computations \cite[Corollary 6.43]{A1-alg-top} imply that for $\scr X = S^0$, already $\ul{\pi}_0 Q_2 S^0 \wequi \ul{GW} \wequi \ul{\pi}_0 Q S^0$.
Our result shows that this stabilization is not special to $S^0$, except that our results are not strong enough to establish stabilization at $Q_2$, only at $Q_3$.
See also Remark \ref{rmk:speculation}.
\end{example}

We also obtain some conservativity results; here is a simple form.
It is similar to the fact that stabilization is conservative on simply connected topological spaces.
Write $\Spc(k)_*(n) \subset \Spc(k)_*$ for the subcategory generated under homotopy colimits by objects of the form $X_+ \wedge \Gmp{n}$ with $X \in \Sm_k$ (and $\Gm := (\A^1 \setminus 0, 1)$, $X_+ := X \coprod *$).
Denote by $\Spc(k)_{*,\ge 1} \subset \Spc(k)_*$ the subcategory of $\A^1$-simply connected spaces.
\begin{theorem}
Let $k$ be perfect, and put $n=1$ if $char(k) = 0$ and $n = 3$ if $char(k) > 0$.
Then the stabilization functor \[ Q: \Spc(k)_{*,\ge 1} \cap \Spc(k)_*(n) \to \Spc(k)_* \] is conservative (i.e., detects equivalences).
\end{theorem}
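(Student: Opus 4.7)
The plan is to reduce the conservativity statement to the $\ul{\pi}_0$-assertion already obtained (Theorem \ref{thm:intro-main}, refined by Corollary \ref{cor:freudenthal}), via a standard $S^1$-loop argument. Given $f: \scr X \to \scr Y$ in $\Spc(k)_{*, \ge 1} \cap \Spc(k)_*(n)$ with $Qf$ an equivalence, I want $\ul{\pi}_i(f)$ to be an isomorphism for every $i \ge 0$. For $i = 0, 1$ this is immediate from the $\A^1$-simply-connected hypothesis. For $i \ge 2$, I would use that $Q = \Omega_{\P^1}^\infty\Sigma_{\P^1}^\infty$ commutes with $\Omega_{S^1}$ (both factors do, being respectively a right adjoint and a left adjoint preserving the relevant (co)limits), so $Q(\Omega_{S^1}^i f)$ is again an equivalence; and since $\ul{\pi}_i(g) = \ul{\pi}_0(\Omega_{S^1}^i g)$, the problem reduces to showing that the unit $\scr Z \to Q\scr Z$ is a $\ul{\pi}_0$-isomorphism whenever $\scr Z = \Omega_{S^1}^i \scr X$ or $\scr Z = \Omega_{S^1}^i \scr Y$ for some $i \ge 0$.

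This $\ul{\pi}_0$-isomorphism I would establish by factoring $\scr Z \to Q\scr Z$ through $Q_m \scr Z$ for a suitable $m$: Theorem \ref{thm:intro-main} supplies the isomorphism $\ul{\pi}_0 Q_m \scr Z \cong \ul{\pi}_0 Q\scr Z$ (for $m$ large enough), while a motivic Freudenthal statement, to be drawn from Corollary \ref{cor:freudenthal}, should give $\ul{\pi}_0 \scr Z \cong \ul{\pi}_0 Q_m \scr Z$. The Freudenthal step requires enough $\Gm$-connectivity for the $m$-fold $\P^1$-suspension to be a $\ul{\pi}_0$-iso on $\scr Z$, and this is provided by the hypothesis $\scr X, \scr Y \in \Spc(k)_*(n)$. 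Crucially, this $\Gm$-connectivity persists under $\Omega_{S^1}^i$-looping, which affects only the simplicial direction, not the $\Gm$-direction; by contrast, the $\A^1$-simply-connected hypothesis does not survive looping, so the Freudenthal statement must be available without it.

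The main obstacle is therefore ensuring that Corollary \ref{cor:freudenthal} applies to the iterated loop spaces $\Omega_{S^1}^i \scr X$ and $\Omega_{S^1}^i \scr Y$, i.e., that the Freudenthal statement proved in the main body depends only on $\Gm$-connectivity in the sense of $\Spc(k)_*(n)$ and not on any $S^1$-connectivity. This places real constraints on the form of Corollary \ref{cor:freudenthal}, and is, I expect, the source of the differing thresholds for $n$: in characteristic zero, Morel's sharper control over motivic Freudenthal allows the choice $n = 1$, whereas $n = 3$ in positive characteristic compensates for weaker stabilization estimates.
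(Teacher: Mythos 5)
Your reduction rests on two claims that are false, so the argument does not go through.

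First, $Q$ does not commute with $\Omega_{S^1}$. Your justification ("a right adjoint and a left adjoint preserving the relevant (co)limits") conflates limits with colimits: $\Sigma^\infty_{\P^1}$ (and each $\Sigma^n_{\P^1}$) is a left adjoint and has no reason to preserve the limit $\Omega_{S^1}$. This already fails in classical topology: by the James splitting, $\pi_1 Q\Omega S^2 \cong \pi_1^s(S^1)\oplus\pi_1^s(S^2)\oplus\cdots$, whereas $\pi_1\Omega QS^2 \cong \pi_2^s(S^2)$. So the identity $\ul{\pi}_i(Qf)=\ul{\pi}_0(Q\Omega_{S^1}^i f)$ that your reduction needs is not available. (Relatedly, $\Spc(k)_*(n)$ is defined by closure under homotopy colimits, so there is also no reason that $\Omega_{S^1}^i\scr X$ stays in $\Spc(k)_*(n)$.) Second, and independently, the ``Freudenthal step'' $\ul{\pi}_0\scr Z\cong\ul{\pi}_0 Q_m\scr Z$ is not what Corollary \ref{cor:freudenthal} or Theorem \ref{thm:intro-main} provide, and it is false: those results compare $\ul{\pi}_0 Q_n$ with $\ul{\pi}_0 Q$, never $\ul{\pi}_0=\ul{\pi}_0 Q_0$ with $\ul{\pi}_0 Q_1$. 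Indeed $\ul{\pi}_0 Q\scr Z$ is a sheaf of abelian groups while $\ul{\pi}_0\scr Z$ is merely a sheaf of pointed sets (already $\ul{\pi}_0 S^0=S^0$ versus $\ul{\pi}_0 QS^0=\ul{GW}$). Had your reduction worked, it would show that the unit $\scr X\to Q\scr X$ is a $\ul{\pi}_*$-isomorphism for every $\scr X$ in the subcategory, i.e.\ that every such space is already $\P^1$-stable; this is much stronger than conservativity and is false (compare $\ul{\pi}_3(\P^1\wedge\P^1)$ with its stable analogue, as in topology $\pi_3 S^2=\Z$ versus $\pi_3^s S^2=\Z/2$).

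The paper's route avoids homotopy sheaves of loop spaces entirely: factor $Q$ as $\Omega^\infty_{S^1}\circ\omega^\infty\sigma^\infty\circ\Sigma^\infty_{S^1}$. On $\A^1$-simply connected spaces $\Sigma^\infty_{S^1}$ is conservative (this is the cited result of Wickelgren--Williams), it lands in $\SHS(k)_{\ge 0}\cap\SHS(k)(n)$, and there the composite $\omega^\infty\sigma^\infty$ is conservative because the canonical (triple) resolution $E\to\lim_\Delta(\omega^\infty\sigma^\infty)^{\bullet+1}E$ converges (Theorem \ref{thm:convergence}, Corollary \ref{cor:conservative}); finally $\Omega^\infty_{S^1}$ detects equivalences of connective $S^1$-spectra. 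The thresholds $n=3$ versus $n=1$ come not from a sharper unstable Freudenthal theorem in characteristic $0$, but from the hypotheses $T_d(k)$ (valid for $d\ge 3$ in general, $d\ge 2$ in characteristic $0$) together with Levine's theorem that $S_j(k)$ holds in characteristic $0$, which lets one descend along the slice tower from $\SHS(k)(d)$ to $\SHS(k)(1)$. If you want to salvage an argument along your lines, the essential move is to replace ``$\ul{\pi}_0$ of iterated loop spaces'' by the $t$-structure on $\SHS(k)(d)$ and the convergence of the canonical resolution.
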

In particular $\Sigma_{\P^1}$ and all of its iterates, and also $\Sigma_{\P^1}^\infty$, are conservative on the same subcategory.
\begin{proof}
This is an immediate consequence of Corollary \ref{cor:conservative} and e.g. \cite[Corollary 2.23]{wickelgren2014simplicial}.
\end{proof}

The results in \S\ref{sec:app} are stronger than the sample given above; in fact they are stated in terms of the stabilization functor from $S^1$-spectra to $\P^1$-spectra.
The reader is encouraged to skip to this section directly.
Our main results in the form of Corollary \ref{cor:freudenthal}, Theorem \ref{thm:convergence} and Corollary \ref{cor:conservative} can be understood without reading the rest of the article (except perhaps for taking a glance at \S\ref{subsec:app-notation}, where some notation is introduced).

\subsection{Pullbacks and the Rost--Schmid complex}
The results sketched above are obtained by combining the main results of \cite{bachmann-moving} with a technical result that we describe now.
Essentially, this establishes \cite[Conjecture 6.10]{bachmann-moving} (for $n \ge 3$); all our applications are a consequence of this and were already anticipated when writing \cite{bachmann-moving}.

Let $M$ be a strictly homotopy invariant sheaf (see \S\ref{sec:preliminaries} for this and related notions, and a more complete account of the following sketch) and $X$ a smooth variety.
Morel has proved \cite[Corollary 5.43]{A1-alg-top} that there is a very convenient complex, known as the \emph{Rost--Schmid complex} $C^*(X, M)$, which can be used to compute the Nisnevich cohomology $H^*(X, M)$.
This complex has the special property that $C^n(X, M)$ only depends on the $n$-fold contraction $M_{-n}$, and similarly so does the boundary map $C^n(X, M) \to C^{n+1}(X, M)$.
Let $Z \subset X$ have codimension $\ge d$.
An obvious modification $C^*_Z(X, M)$ of $C^*(X,M)$ can be used to compute $H^*_Z(X, M)$; by construction one has $C^n_Z(X, M) = 0$ for $n < d$.
It follows that the group $H^d_Z(X, M)$ only depends on $M_{-d}$ (in fact this holds for all groups $H^*_Z(X, M)$, but we are most interested in the lowest one).
Now let $f: Y \to X$ be a morphism of smooth varieties with $f^{-1}(Z)$ also of codimension $\ge d$ on $Y$.
Then the pullback map \begin{equation} \label{eq:intro} f^*: H^d_Z(X, M) \to H^d_{f^{-1}(Z)}(Y, M) \end{equation} is a morphism of abelian groups, both of which only depend on $M_{-d}$.

It is not difficult to show (using the results of \cite{bachmann-moving}; see the proof of Theorem \ref{thm:hearts} for details) that \cite[Conjecture 6.10]{bachmann-moving} is equivalent to the statement that the morphism \eqref{eq:intro} also only depends on $M_{-d}$, in an appropriate sense.\footnote{In fact, our original plan for \cite{bachmann-moving} was to establish \cite[Conjecture 6.10]{bachmann-moving} (and hence the results in \S\ref{sec:app}) by proving that $f^*$ only depends on $M_{-d}$. This turned out to be more difficult than we had anticipated.}
The main result of this article (Theorem \ref{thm:formula}) states that this is true.

We establish this by adapting an argument of Levine, using a variant of Gabber's presentation lemma to set up an induction on $d$. (The case $d=0$ holds tautologically.)

\subsection{From pullbacks to stabilization}
This article brings to conclusion a program started in \cite{bachmann-moving}.
There we developed the following strategy for establishing stabilization results such as Theorem \ref{thm:intro-main}.
First we note that $S^1$-stabilization is well-understood and behaves largely as in topology; thus it suffices to prove the analogous result for motivic $S^1$-spectra.
(For detailed definitions of this and the following notions, see \S\ref{subsec:app-notation}.)
Write $\SHS(k)(d) \subset \SHS(k)$ for the localizing subcategory generated by $d$-fold $\Gm$-suspensions, and similarly $\SH(k)^\eff(d) \subset \SH(k)$ for the localizing subcategory generated by the image of $\SHS(k)(d)$.
These categories afford $t$-structures induced by the canonical generating sets, and hence the stabilization functor $\SHS(k)(d) \to \SH(k)^\eff(d)$ is right-$t$-exact.
One finds that in order to prove stabilization results, it will be enough to show that the induced functor on hearts $\SHS(k)(d)^\heart \to \SH(k)^\eff(d)^\heart$ is an equivalence.
Since the right hand category is by now well-understood, let us focus on the left hand side.
It is not difficult to show that the functor of $d$-fold $\Gm$-loops $\SHS(k)(d)^\heart \to \SHS(k)^\heart \wequi \HI(k)$ is monadic.
In other words, we may think of objects of $\SHS(k)(d)^\heart$ as strictly homotopy invariant sheaves with extra structure.
One way of phrasing the main result of \cite{bachmann-moving} is (see e.g. \cite[Remark 4.17]{bachmann-moving}) that this extra structure is precisely the data of closed pullbacks on cohomology with support in codimension $d$.
These are precisely the kinds of maps that we show only depend on $M_{-d}$ ``in an appropriate sense''.
To be more specific, the appropriate sense is that $M_{-d}$ is a so-called sheaf with $\A^1$-transfers (see Remark \ref{rmk:A1-transfers} for details), and the pullback only depends on this additional structure.

All of this more or less implies\footnote{Making these arguments precise requires some further effort; for this reason in \S\ref{subsec:app-heart} we follow a slightly different strategy.} that $\SHS(k)(d)^\heart$ is equivalent to the full subcategory of the category of sheaves with $\A^1$-transfers on objects of the form $M_{-d}$.
It follows from \cite[Theorem 5.19]{bachmann-moving} that for $d$ big enough, this subcategory is equivalent to $\SH(k)^\eff(d)^\heart$, as desired.

\subsection{Acknowledgements}
It is my pleasure to thank Maria Yakerson, Marc Levine and Mike Hopkins for fruitful discussions about these problems.

I would also like to thank\NB{keep alphabetical} Marc Hoyois and Maria Yakerson for comments on a draft of this article.

\subsection{Notation and conventions}
We fix throughout a field $k$.
All non-trivial results will require $k$ to be perfect.

Given a presheaf $M$ on the category of smooth varieties over $k$, and an essentially smooth $k$-scheme $X$, we denote by $M(X)$ the evaluation at $X$ of the canonical extension of $M$ to pro-(smooth schemes), into which the category of essentially smooth schemes embeds by \cite[Proposition 8.13.5]{EGAIV}.
In other words, if $X = \lim_i X_i$ is a cofiltered limit of smooth $k$-schemes with affine transition maps, then $M(X) = \colim_i M(X_i)$ (and this is known to be independent of the presentation of $X$).

Given a scheme $X$ and a point $x \in X$, we identify $x$ and $Spec(k(x))$.
In particular, if $X$ is smooth and $k$ is perfect (so that $x$ is essentially smooth), then we write $M(x)$ for what is often denoted $M(k(x))$.

Given a scheme $X$ and $d \ge 0$, we write $X^{(d)}$ for the set of points of $X$ of codimension $d$ on $X$; in other words if $x \in X$ then $x \in X^{(d)}$ if and only if $\dim X_x = d$, where $X_x$ denotes the localization of $X$ in $x$.
For example, $X^{(0)}$ is the set of generic points of $X$.

For a regular immersion $Y \hookrightarrow X$, we denote by $N_{Y/X}$ the normal bundle and by $\omega_{Y/X} = \det N_{Y/X}^{\vee}$ the determinant of the conormal bundle.
More generally for any morphism $Y \to X$ such that the cotangent complex $L_{Y/X}$ is perfect we write $\omega_{Y/X} = \det L_{Y/X}$.

\section{Preliminaries} \label{sec:preliminaries}
We recall some well-known results from motivic homotopy theory.

\subsection{Strictly homotopy invariant sheaves}
We write $\Sm_k$ for the category of smooth $k$-schemes.
We make it into a site by endowing it with the Nisnevich topology \cite{nisnevich1989completely}.
This is the only topology we shall use; all cohomology will be with respect to it.
Unless noted otherwise, by a (pre)sheaf we mean a (pre)sheaf of abelian groups on $\Sm_k$.

Recall that a sheaf $M$ is called \emph{strictly homotopy invariant} if, for all $X \in \Sm_k$, the canonical map $H^*(X, M) \to H^*(\A^1 \times X, M)$ is an isomorphism.
We denote the category of strictly homotopy invariant sheaves by $\HI(k)$.

\begin{example}
For a commutative ring $A$, denote by $GW(A)$ its Grothendieck-Witt ring, i.e. the additive group completion of the semiring of isometry classes of non-degenerate, symmetric bilinear forms on $A$ \cite{milnor1973symmetric}.
Write $\ul{GW}$ for the associated Nisnevich sheaf on $\Sm_k$.
Then $\ul{GW}$ turns out to be strictly homotopy invariant (combine \cite[Theorem A]{panin-witt-purity} and \cite[\S\S 2,3]{A1-alg-top}).
\end{example}

\begin{remark} \label{rmk:pullback-defn}
As mentioned in the introduction, there exists a universal homotopy theory built out of (pointed) smooth varieties by enforcing $\A^1$-homotopy invariance and Nisnevich descent \cite{A1-homotopy-theory}; we denote it by $\Spc(k)_*$.
By construction, for $M \in \HI(k)$, the Eilenberg-MacLane spaces $K(M, i)$ define objects in $\Spc(k)_*$.
In this way, results about $\Spc(k)_*$ translate into properties of the cohomology of strictly homotopy invariant sheaves.
For example given $X \in \Sm_k$ and $Z \subset X$ closed we have an isomorphism \[ H^i_Z(X,M) \wequi [X/X \setminus Z, K(M,i)]_{\Spc(k)_*}, \] and given a morphism $f: Y \to X$ (respectively a closed subset $Z' \subset Z$) we have morphisms $Y/Y \setminus f^{-1}(Z) \to X/X \setminus Z$ (respectively $X/X \setminus Z \to X/X \setminus Z'$) inducing the pullback $H^i_Z(X, M) \to H^i_{f^{-1}(Z)}(Y, M)$ (respectively the extension of support map $H^i_{Z'}(X, M) \to H^i_Z(X,M)$).
We will use this correspondence freely in the sequel.
\end{remark}

\subsubsection{Unramifiedness}
Let $X \in \Sm_k$ be connected and $\emptyset \ne U \subset X$ be open.
Then for $M \in \HI(k)$, the canonical map $M(X) \to M(U)$ is an injection \cite[Lemma 6.4.4]{morel2005stable}.
It follows that if $\xi \in X$ is the generic point, then $M(X) \hookrightarrow M(\xi)$.

\subsubsection{Contractions}
For a presheaf $M$, write $M_{-1}$ for the presheaf given by $M_{-1}(X) = ker(M((\A^1 \setminus 0) \times X) \xrightarrow{i_1^*} M(X))$ and $M_{-n}$ for the $n$-fold iteration of this construction.
Here $i_1: X \to (\A^1 \setminus 0) \times X$ denotes the inclusion at $1 \in \A^1$.
Pullback along the structure map splits $i_1^*$ and hence $M_{-1}$ is a summand of the internal mapping object $\iHom(\A^1 \setminus 0, M)$.
It follows that $M_{-n}$ is a ((strictly\NB{this needs an extra argument...}) homotopy invariant) (pre)sheaf if $M$ is.

\begin{example}
We have $H^1(\P^1_K, M) \wequi M_{-1}(K)$, for any finitely generated separable field extension $K/k$.
Indeed we can cover $\P^1_K$ by two copies of $\A^1_K$ with intersection $(\A^1 \setminus 0)_K$ and $H^i(\A^1_K, M) \wequi H^i(Spec(K), M)$ (whence in particular $H^1(\A^1_K, M) = 0$), so the claim follows from the Mayer--Vietoris sequence for this covering.
\end{example}

\subsubsection{$GW$-module structure} \label{subsec:GW-module}
Let $X \in \Sm_k$ and $u \in \scr O(X)^\times$.
Multiplication by $u$ defines an endomorphism of $(\A^1 \setminus 0) \times X$ and hence of $\iHom(\A^1 \setminus 0, M)(X)$; passing to the summand we obtain $\lra{u}: M_{-1}(X) \to M_{-1}(X)$.
Suppose that $M \in \HI(k)$.
Since the map $\Z[\scr O^\times] \to \ul{GW}, u \mapsto \lra{u}$ is surjective on fields, unramifiedness implies that this construction extends in at most one way to a $\ul{GW}$-module structure on $M_{-1}$.
It turns out that this $\ul{GW}$-module structure always exists \cite[Lemma 3.49]{A1-alg-top}.

\subsubsection{Twisting}
Given a line bundle $\scr L$ on $X \in \Sm_k$, write $\scr L^\times$ for the sheaf of non-vanishing sections.
For $M \in \HI(k)$ and $d > 0$ we put $M_{-d}(X, \scr L) = H^0(X, M_{-d} \times_{\scr O^\times} \scr L^\times)$; here the action of $\scr O^\times$ on $M_{-d}$ is via $\scr O^\times \to \ul{GW}$ and the action on $\scr L^\times$ is given by multiplication.
Note that since $\lra{u} = \lra{u^{-1}}$ we have $M_{-d}(X, \scr L) \wequi M_{-d}(X, \scr L^{-1})$.

\subsubsection{Thom spaces} \label{subsec:thom}
For $X \in \Sm_k$ and $V$ a vector bundle on $X$ of rank $d$, we have $Th(V) := V/V \setminus 0_X \in \Spc(k)_*$.
For $M \in \HI(k)$, there are canonical isomorphisms \cite[Lemma 5.35]{A1-alg-top} \[ [V/V \setminus 0_X, K(M, d)]_{\Spc(k)_*} \wequi H^d_{0_X}(V, M) \wequi M_{-d}(X, \det V). \]

\subsubsection{Homotopy purity} \label{subsec:htpy-purity}
Let $X \in \Sm_k$, $U \subset X$ open with reduced closed complement $Z = X \setminus U$ also smooth.
Then in $\Spc(k)_*$ there is a canonical equivalence \cite[\S3 Theorem 2.23]{A1-homotopy-theory} \[ X/X \setminus Z \wequi Th(N_{Z/X}). \]

\subsubsection{Boundary maps} \label{subsec:boundary}
Let $X \in \Sm_k$ and $x \in X^{(d)}$.
Then $X_x$ is an essentially smooth scheme with closed point $x$.
Homotopy purity supplies us with the collapse sequence\footnote{Here and in the sequel we view essentially smooth schemes as defining pro-objects in $\Spc(k)$.} \[ X_x \to X_x/X_x \setminus x \wequi Th(N_{x/X}) \xrightarrow{\partial} \Sigma(X_x \setminus x). \]
Pullback along $\partial$ induces the boundary map in the long exact sequence of cohomology with support.
We most commonly use the case where $d=1$.
Then $X_x \setminus x = \xi$ where $\xi$ is the generic point of $X$ (specializing to $x$), and the boundary map takes the familiar form \[ \partial: M(\xi) \to M_{-1}(x, \omega_{x/X}). \]

\subsubsection{Monogeneic transfers} \label{subsec:transfer}
Let $k$ be perfect and $K/k$ be a finitely generated field extension, whence $X = Spec(K)$ is an essentially smooth scheme.
Let $K(x)/K$ be a finite, monogeneic field extension.
We are supplied with an embedding $X' = Spec(K(x)) \xhookrightarrow{x} \A^1_X \subset \P^1_X$ and thus homotopy purity provides us with a collapse map \[ \P^1_X \to \P^1_X/\P^1_X \setminus X' \wequi Th(N_{X'/\A^1_X}) \wequi Th(\scr O_{X'}); \] here the normal bundle is canonically trivialized by the minimal polynomial of $x$.
Pullback along this collapse map induces the \emph{monogeneic transfer}\footnote{Morel calls this the \emph{geometric transfer}.} \cite[p.99]{A1-alg-top} \[ \tau_x: M_{-1}(K(x)) \to M_{-1}(K). \]
Slightly more generally, suppose that $z \in \P^1_K$ is any closed point.
Then we have the transfer map \[ \tr_z: M_{-1}(z, \omega_{z/\P^1_K}) \wequi H^1_z(\P^1_K, M) \to H^1(\P^1_K, M) \wequi M_{-1}(K). \]
This contains no new information: if $z \in \A^1_K$ then $\tr_z$ coincides up to isomorphism with $\tau_z$, and the only other case is $z = \infty$ which is a rational point, and so $\tr_z$ is isomorphic to the identity.

\subsection{Cousin and Rost--Schmid complexes}
Let $M$ be a sheaf of abelian groups on $X$.
The cohomology of $M$ on $X$ can be computed using the \emph{coniveau spectral sequence}; see e.g. \cite[\S1]{bloch-ogus-gabber}.
On the zero line of the $E_1$ page one finds the so-called \emph{Cousin complex} \cite[(1.3)]{bloch-ogus-gabber} \begin{equation}\label{eq:cousin} M(X) \to \bigoplus_{x \in X^{(0)}} M(x) \to \bigoplus_{x \in X^{(1)}} H^1_x(X, M) \to \dots \to \bigoplus_{x \in X^{(d)}} H^d_x(X, M) =: C^d(X, M) \to \cdots. \end{equation}
Here \begin{equation} \label{eq:cousin-terms} H^d_x(X, M) = \colim_{V \ni x} H^d_{\overline{\{x\}} \cap V}(V, X), \end{equation} where the colimit runs over open neighborhoods of $x$.
The boundary maps in \eqref{eq:cousin} are induced by certain boundary maps in long exact sequences of cohomology with support.

Now suppose that $M \in \HI(k)$.
By the Bloch--Ogus--Gabber theorem \cite[Theorem 6.2.1]{bloch-ogus-gabber}, the Cousin complex \eqref{eq:cousin} is then \emph{exact} when viewed as a complex of sheaves (i.e. for $X$ local).
Since it consists of flasque sheafes, it can thus be used to compute the Zariski cohomology of $M$.
The terms also turn out to be Nisnevich-acyclic (see \cite[Theorem 8.3.1]{bloch-ogus-gabber} or \cite[Lemma 5.42]{A1-alg-top}), and hence the Cousin complex computes the Nisnevich cohomology of $M$ as well (which thus turns out to coincide with the Zariski cohomology).

\begin{remark} \label{rmk:cousin-support}
The Cousin complex can also be used to compute cohomology with support in a closed subscheme $Z$; just replace \[ C^d(X, M) = \bigoplus_{x \in X^{(d)}} H^d_x(X, M) \quad\text{by}\quad C^d_Z(X, M) = \bigoplus_{x \in X^{(d)} \cap Z} H^d_x(X, M). \]
This holds since the resolving sheaves are flasque.
\end{remark}

Now let $k$ be perfect.
We would like to make this complex more explicit.
As a first step, homotopy purity (see \S\S\ref{subsec:thom},\ref{subsec:htpy-purity}) allows us to identify the groups \eqref{eq:cousin-terms} in the Cousin complex more explicitly as \[ H^d_x(X, M) \wequi M_{-d}(x, \omega_{x/X}). \]
Indeed by generic smoothness, shrinking $V$ if necessary we may assume that $Z := \overline{\{x\}} \cap V$ is smooth of codimension $d$ on $V$, and then \[ H^d_Z(V, M) \wequi [V/V \setminus Z, K(d,M)] \wequi [Th(N_{Z/V}), K(d,M)] \wequi M_{-d}(Z, \omega_{Z/V}); \] the claim now follows by taking colimits.
The boundary maps of the Cousin complex can also be identified.
We only use the following weak form of this result.

\begin{theorem}[Morel \cite{A1-alg-top}, Corollary 5.44] \label{thm:morel}
Let $M \in \HI(k)$, $X \in \Sm_k$ and $d \ge 0$.
Then the boundary map $C^d(X, M) \to C^{d+1}(X, M)$ in the Cousin complex only depends on the sheaf $M_{-d}$, together with (if $d>0$) its structure as a $\ul{GW}$-module from \S\ref{subsec:GW-module}.
\end{theorem}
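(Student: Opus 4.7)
The plan is to reduce the statement to a local codimension-$1$ statement for the sheaf $M_{-d}$, by localizing the Cousin differential at each specialization $x \rightsquigarrow y$ with $x \in X^{(d)}$ and $y \in X^{(d+1)}$. The matrix entry $\partial_{x,y}$ is local around $y$, so by generic smoothness and perfectness of $k$ we may shrink $X$ to assume that $Z := \overline{\{x\}}$ is smooth of codimension $d$ in $X$ and that $\overline{\{y\}}$ is a smooth codimension-$1$ subvariety of $Z$. Under this reduction $\partial_{x,y}$ is the connecting morphism of the cofiber sequence
\[ X/(X\setminus Z) \to X/(X\setminus y) \to \Sigma\bigl((X\setminus y)/(X\setminus Z)\bigr), \]
applied to $K(M, \bullet)$ and with source and target identified via \S\S\ref{subsec:thom}--\ref{subsec:htpy-purity} as $M_{-d}(x, \omega_{x/X})$ and $M_{-(d+1)}(y, \omega_{y/X})$, respectively.

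The key step is to rewrite this connecting morphism using the flag $y \subset Z \subset X$. Applying homotopy purity to each of the inclusions $Z \hookrightarrow X$, $y \hookrightarrow Z$, and $y \hookrightarrow X$, together with the normal bundle short exact sequence $0 \to N_{y/Z} \to N_{y/X} \to (N_{Z/X})|_y \to 0$, one recognizes the above cofiber sequence as the Thom space of $(N_{Z/X})|_y$ smashed with the cofiber sequence for the codimension-$1$ pair $y \subset Z$. After trivializing $(N_{Z/X})|_y$ this exhibits $\partial_{x,y}$, up to twists, as the codimension-$1$ boundary
\[ \partial_Z : M_{-d}(x) \to (M_{-d})_{-1}(y, \omega_{y/Z}) \]
of the sheaf $M_{-d}$ on the essentially smooth scheme $Z_y$ from \S\ref{subsec:boundary}, followed by the canonical identification $(M_{-d})_{-1} = M_{-(d+1)}$. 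Both of these manifestly depend only on $M_{-d}$.

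For $d = 0$ there is no source twist and the argument closes with no further input. For $d > 0$, the remaining ambiguity lies in the trivialization of $(N_{Z/X})|_y$ and in the canonical isomorphism $\omega_{y/X} \iso \omega_{y/Z} \otimes \omega_{Z/X}|_y^\vee$; both effects are absorbed by the $\ul{GW}$-module structure on $M_{-d}$ from \S\ref{subsec:GW-module}, which is precisely the extra structure the theorem allows us to invoke. The main obstacle is the purity compatibility in the middle paragraph: verifying that the two applications of homotopy purity (to $Z \subset X$ and to $y \subset X$) fit into a commutative diagram along the normal bundle short exact sequence above. This is the naturality of Morel--Voevodsky homotopy purity for a flag of smooth closed subschemes, and can be established by running the deformation-to-the-normal-cone construction simultaneously on the pair $(y \subset Z, Z \subset X)$.
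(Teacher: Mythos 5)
The paper does not prove this statement; it is quoted directly from Morel (\cite[Corollary 5.44]{A1-alg-top}), whose proof proceeds by identifying the Cousin differential with the explicitly defined Rost--Schmid differential. Your attempt to reprove it directly has a genuine gap at the very first reduction. You claim that, since $\partial_{x,y}$ is local around $y$, one may shrink $X$ so that $Z=\overline{\{x\}}$ is smooth. Generic smoothness (from perfectness of $k$) only makes $Z$ smooth on a dense open subset of $Z$, and that open subset need not contain the codimension-$1$ point $y$: the point $y$ may lie in the singular locus of $Z$, and since the matrix entry $\partial_{x,y}$ is computed in a neighbourhood of $y$, you cannot discard $y$ when shrinking. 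So the case your argument actually handles is the one where $Z$ is smooth at $y$ (and there the purity/flag compatibility you invoke is indeed the right mechanism), but this is not the general case, and the general case is precisely where the difficulty of Morel's theorem lies.

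Concretely, when $Z$ is singular at $y$, Morel's formula for $\partial_{x,y}$ passes to the normalization of $\overline{\{x\}}_y$, applies codimension-$1$ boundaries there, and then sums over the points $\tilde y$ above $y$ the (composites of) monogeneic transfers along the finite residue-field extensions $k(\tilde y)/k(y)$. The reason the statement is nevertheless true is the observation recorded in the paper's footnote: these transfers act on $M_{-d-1}=(M_{-d})_{-1}$, and the monogeneic transfer on a contraction $N_{-1}$ is canonically determined by the sheaf $N$ alone; hence the differential $C^d\to C^{d+1}$ depends only on $M_{-d}$ (plus its $\ul{GW}$-module structure, needed to handle the twists you correctly identify). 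Your proposal never confronts the appearance of these transfers, so it cannot be completed as written; any correct argument must either reproduce Morel's Cousin--vs--Rost--Schmid comparison or otherwise account for the singular specializations.
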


In fact, Morel proves this result by identifying the Cousin complex with another complex called the \emph{Rost--Schmid} complex (which has the same terms but a priori different boundary maps).
In other words the boundary map in the Cousin complex admits an explicit formula, involving only the codimension $1$ boundary of \S\ref{subsec:boundary} and (composites of) the monogeneic transfer of \S\ref{subsec:transfer}.\footnote{The monogeneic transfer on $M_{-d}$ is extra structure, determined by the presentation of $M_{-d}$ as a contraction. But the boundary $C^d \to C^{d+1}$ involves the monogeneic transfer only on $M_{-d-1}$, and so indeed only depends on the sheaf $M_{-d}$.}\NB{But even the weak form seems highly non-trivial already.}

In the sequel, we will not distinguish between the Cousin and Rost--Schmid complexes.

\section{A ``formula'' for closed pullback} \label{sec:formula}
In this section we establish our main result.
\begin{theorem} \label{thm:formula}
Let $k$ be a perfect field, $M \in \HI(k)$, $f: Y \to X \in \Sm_k$, $d \ge 1$, $Z \subset X$ closed of codimension $\ge d$ such that $f^{-1}(Z) \subset Y$ is also of codimension $\ge d$.
Then the map \[ f^*: H^d_Z(X, M) \to H^d_{f^{-1}(Z)}(Y, M) \] only depends on $M_{-d} \in \HI(k)$, together with its $GW$-module structure and transfers along monogeneic field extensions (in the sense of \S\S\ref{subsec:GW-module},\ref{subsec:transfer}).
\end{theorem}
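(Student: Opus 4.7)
The proof is by induction on $d$, with the base $d = 0$ holding tautologically: $H^0_Z(X,M) = \ker(M(X) \to M(X \setminus Z))$ and $f^*$ is the restriction of $M(f)$ to such sections, which manifestly only uses the sheaf $M = M_{-0}$.

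For the inductive step I would first exploit the Rost--Schmid description. Since $Z \subset X$ and $f^{-1}(Z) \subset Y$ have codimension $\ge d$, Remark~\ref{rmk:cousin-support} gives
\[ H^d_Z(X,M) \;=\; \ker\!\Bigl(\bigoplus_{z \in Z \cap X^{(d)}} M_{-d}(z, \omega_{z/X}) \;\longrightarrow\; \bigoplus_{w \in Z \cap X^{(d+1)}} M_{-d-1}(w, \omega_{w/X})\Bigr), \]
and analogously for $(Y, f^{-1}(Z))$. By Theorem~\ref{thm:morel} this differential only depends on the sheaf $M_{-d}$ together with its $\ul{GW}$-module structure, so the source and target of $f^*$ are already objects depending only on the allowed data. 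It remains to show that the map $f^*$ itself is.

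I would then write down a Rost-style candidate $\tilde f^* \colon C^d_Z(X,M) \to C^d_{f^{-1}(Z)}(Y,M)$: for each $y \in f^{-1}(Z) \cap Y^{(d)}$ lying over $z \in Z \cap X^{(d)}$, the $(z,y)$-component is the composite of (a) a quadratically-refined multiplicity in $\ul{GW}(k(y))$ encoding the local length of $f^{-1}(z)$ at $y$, (b) the restriction map $M_{-d}(k(z)) \to M_{-d}(k(y))$, twisted by the canonical comparison $f^*\omega_{z/X} \cong \omega_{y/Y}$, and, when $k(y)/k(z)$ fails to be separable, (c) a composite of monogeneic transfers as in \S\ref{subsec:transfer}. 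All of this uses only the $\ul{GW}$-action and monogeneic transfers on $M_{-d}$, so $\tilde f^*$ depends only on the allowed data.

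The heart of the argument is to prove $\tilde f^* = f^*$, and here I would follow Levine's deformation-theoretic strategy. Working Zariski-locally on $Y$ at a generic point $y$ of codimension $d$ in $f^{-1}(Z)$ and, after a harmless henselization, a variant of Gabber's presentation lemma produces a smooth morphism $Y \to W$ of relative dimension $1$ along which $f^{-1}(Z)$ is finite, together with a compatible factorization of $f$. The collapse sequences of \S\S\ref{subsec:htpy-purity}--\ref{subsec:boundary} then split $f^*$ into (i) a codimension-$(d-1)$ pullback on $H^{d-1}$ of the sheaf $M_{-1}$, (ii) the codimension-$1$ boundary map, and (iii) a monogeneic transfer. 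Since $(M_{-1})_{-(d-1)} = M_{-d}$, the inductive hypothesis covers (i), while (ii) and (iii) are explicitly among the allowed structures. The principal obstacle is producing Gabber's presentation uniformly for an arbitrary $f$ --- which need be neither smooth nor a closed immersion --- and tracking the line-bundle twists through the various homotopy-purity identifications carefully enough that no higher contraction $M_{-d-1}, M_{-d-2}, \ldots$ surreptitiously enters the comparison.
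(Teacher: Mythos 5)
Your high-level skeleton --- induction on $d$, a Gabber-type presentation fibering the situation in relative dimension one with $Z$ finite over the base, and a reduction to a codimension-$(d-1)$ pullback for $M_{-1}$ --- is exactly the paper's strategy (Lemma \ref{lemm:key}, adapting Levine). But there are two genuine gaps. First, the ``Rost-style candidate'' $\tilde f^*$ you propose to write down does not exist in the naive form you describe: for a non-transverse pullback there is no well-defined quadratically refined multiplicity in $\ul{GW}(k(y))$ attached to the local ring of $f^{-1}(z)$ at $y$ without making choices (deformation to the normal cone, a presentation of the local ring, etc.), and verifying that any such formula is independent of choices and commutes with the Rost--Schmid differentials is essentially as hard as the theorem itself. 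The paper deliberately proves only a \emph{determination} statement and never writes a formula (hence the scare quotes in the section title): the actual mechanism is that $i^*$ commutes with the transfer $\tr_Z$ of Lemma \ref{lemm:construction-is-transfer}, that $\tr_W$ is \emph{injective} on the component of the one bad (non-transverse) generic point because the moving construction makes $W \to W'$ birational there, and that all the other generic points are arranged to be transverse so that Lemma \ref{lemm:transverse-pullback} applies; the unknown component is then solved for from the known ones (Lemma \ref{lemm:pullback-transfer}). This diagram chase, not a splitting of $f^*$ into a composite of allowed maps, is the heart of the argument and is absent from your sketch.

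Second, you flag ``producing Gabber's presentation uniformly for an arbitrary $f$'' as the principal obstacle, but this is the routine part: factor $f$ through its graph as a regular immersion followed by a smooth projection; the smooth case is Remark \ref{rmk:RS-basic}, and after localizing at a generic point of $f^{-1}(Z)$ the regular immersion factors as a chain of codimension-one immersions of essentially smooth schemes, with the codimension condition propagating by a dimension count. What actually requires care (and what your sketch omits) is making the presentation work in positive characteristic: one must base-change along $k(t)/k$ and invoke generic smoothness of the geometrically reduced $Z$ over the new base (Lemma \ref{lemm:nbd}), and one must choose the function $u$ cutting out the auxiliary hypersurface so that $uf$ is finite on $Z$ with only simple extra zeros (Lemma \ref{lemm:technical}), which is what guarantees the transversality at the auxiliary intersection points. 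Your base case is also off by one: the theorem starts at $d=1$, where the induction terminates with the observation that $H^0_{Z'}(X,M_{-1}) \subset M_{-1}(X)$ and the pullback is plain restriction (Example \ref{ex:induction-start}); the ``tautological $d=0$ case'' of the introduction is not literally an instance of the theorem.
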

In the sequel, we shall say ``depends only on $M_{-d}$'' to mean what is asserted in the theorem, i.e. ``depends only on $M_{-d}$ as a $GW$-module with transfers''.
\begin{remark} \label{rmk:A1-transfers}
Let us make precise the notion that ``$f^*$ only depends on $M_{-d}$''.

For this first recall from \cite[\S5.1]{bachmann-moving} the notion of a presheaf with $\A^1$-transfers.
This is just a presheaf $F$ on $\Sm_k$ together with for every finitely generated field $K/k$ a $GW(K)$-module structure on $F(K)$, and for every finite monogeneic extension $K(x)/K$ a transfer $\tau_x: F(K(x)) \to F(K)$.
A morphism of presheaves with $\A^1$-transfers is a morphisms of sheaves which commutes with the $GW(K)$-module structures and the transfers.
If $M \in \HI(k)$ and $d \ge 1$, then $M_{-d}$ acquires the structure of a presheaf with $\A^1$-transfers (see \S\S\ref{subsec:GW-module},\ref{subsec:transfer} or \cite[Example 5.2]{bachmann-moving}).

Now suppose given $M, N \in \HI(k)$ and an isomorphism $M_{-d} \wequi N_{-d}$ of presheaves with $\A^1$-transfers.
Then there is a canonical induced isomorphism $H^d_Z(X, M) \wequi H^d_Z(X, N)$ (and similarly for $Y$), by identifying the Rost--Schmid resolutions with support in $Z$ (i.e. using Remark \ref{rmk:cousin-support} and Theorem \ref{thm:morel}; this does not even depend on the identification of the transfers).
The theorem asserts that this isomorphism is compatible with the pullback $f^*$ (and for this we crucially need the compatibility of the transfers).
\end{remark}

\begin{remark} \label{rmk:RS-basic}
Note that the Rost--Schmid complex is functorial in smooth morphisms\NB{in fact flat morphisms}  in an obvious way, so that the theorem is clear e.g. for $f$ an open immersion.
We will often use this in conjunction with the observation (which follows e.g. from the form of the Rost--Schmid complex) that if $Z$ has codimension $\ge d$ on $X$, then \[ H^d_Z(X, M) \hookrightarrow \bigoplus_z H^d_z(X_z, M), \] where the sum is over the (finitely many) generic points of $Z$ of codimension $d$ on $X$.
\end{remark}

If the support is smooth and the intersection is transverse, all is well.
\begin{lemma} \label{lemm:transverse-pullback}
Suppose that both $Z$ and $f^{-1}(Z)$ (with its induced scheme structure as a pullback) are smooth.
Then $f^*: H^d_Z(X, M) \to H^d_{f^{-1}(Z)}(Y, M)$ only depends on $M_{-d}$.
\end{lemma}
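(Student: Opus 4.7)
The plan has two steps: first reduce to the case where $Z$ and $f^{-1}(Z)$ have pure codimension $d$, and then apply homotopy purity.

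For the reduction, note that a smooth closed subscheme of $X$ has pairwise disjoint irreducible components, so we may split $Z = Z_d \sqcup Z_{>d}$ according to codimension exactly $d$ versus strictly greater. Set $U := X \setminus Z_{>d}$ and $V := Y \setminus f^{-1}(Z_{>d})$, and write $Z'_d := f^{-1}(Z) \cap V$. By the Rost--Schmid description of $H^d$ with support (Remark \ref{rmk:RS-basic}), the restriction-to-open maps $H^d_Z(X, M) \to H^d_{Z_d}(U, M)$ and $H^d_{f^{-1}(Z)}(Y, M) \to H^d_{Z'_d}(V, M)$ are injective (the degree-$d$ piece of the Rost--Schmid complex of $(X, Z)$ agrees with that of $(U, Z_d)$) and manifestly determined by $M_{-d}$, and they are compatible with pullback along $f$. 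Hence it suffices to treat $f|_V : V \to U$, so I may assume that $Z$ and $Z' := f^{-1}(Z)$ are smooth of pure codimension $d$.

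With this assumption, homotopy purity (\S\ref{subsec:htpy-purity}) supplies natural equivalences $X/(X \setminus Z) \wequi Th(N_{Z/X})$ and $Y/(Y \setminus Z') \wequi Th(N_{Z'/Y})$. Since $Z'$ equals the scheme-theoretic pullback $Y \times_X Z$, there is a canonical isomorphism of vector bundles $N_{Z'/Y} \wequi \bar f^* N_{Z/X}$ over the induced morphism $\bar f : Z' \to Z$, and by the naturality of the purity equivalence $f^*$ corresponds under these identifications to the tautological map of Thom spaces $Th(N_{Z'/Y}) \to Th(N_{Z/X})$. Applying $[-, K(M, d)]_{\Spc(k)_*}$ and invoking the Thom isomorphism of \S\ref{subsec:thom}, the map $f^*$ becomes the composition
\[ M_{-d}(Z, \omega_{Z/X}) \xrightarrow{\bar f^*} M_{-d}(Z', \bar f^* \omega_{Z/X}) \wequi M_{-d}(Z', \omega_{Z'/Y}), \]
which is visibly determined by $M_{-d}$ together with its $\scr O^\times$-action (extending uniquely to the $\ul{GW}$-module structure of \S\ref{subsec:GW-module} used to make sense of line-bundle twists).

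Notice that no monogeneic transfers enter, reflecting the simplicity of the smooth transverse case in contrast with the general theorem, where transfers will arise through the Rost--Schmid boundary maps encoding ramification along codimension-$1$ boundaries of the supports. The only delicate point in the plan above is verifying that naturality of the purity equivalence really yields the described Thom-space morphism; this is essentially built into the Morel--Voevodsky construction, so I do not expect it to pose a substantive obstacle.
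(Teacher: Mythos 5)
Your proof is correct and follows essentially the same route as the paper's: reduce to pure codimension $d$, invoke functoriality of the purity equivalence for the morphism of smooth closed pairs $(Y, f^{-1}(Z)) \to (X, Z)$, identify $N_{f^{-1}(Z)/Y}$ with $f^*N_{Z/X}$, and conclude via the Thom isomorphism. The only cosmetic differences are that the paper reduces via extension of support rather than restriction to opens, and trivializes the normal bundle locally (using Remark \ref{rmk:RS-basic}) instead of invoking naturality of the twisted Thom isomorphism.
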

\begin{proof}
Since $Z$ is smooth (and so is $X$), we may write $Z = Z_0 \coprod Z_1$ where all components of $Z_0$ have codimension precisely $d$ on $X$, and all components of $Z_1$ have codimension $>d$.
Consider the commutative diagram
\begin{equation*}
\begin{CD}
H^d_Z(X, M) @>{f^*}>> H^d_{f^{-1}(Z)}(Y, M) \\
@AAA                           @AAA \\
H^d_{Z_0}(X, M) @>{f^*}>> H^d_{f^{-1}(Z_0)}(Y, M).
\end{CD}
\end{equation*}
Here the vertical maps are extension of support, and hence only depend on $M_{-d}$.
Moreover by construction the left hand vertical map is an isomorphism.
We may thus replace $Z$ by $Z_0$, i.e. assume that all components of $Z$ have codimension precisely $d$ on $X$.

Recall from Remark \ref{rmk:pullback-defn} that the pullback $f^*: H^d_Z(X, M) \to H^d_{f^{-1}(Z)}(Y, M)$ is induced by pullback along the map $Y/Y \setminus f^{-1}(Z) \to X/X \setminus Z \in \Spc(k)_*$ on $K(d,M)$.
Let $\eta$ be a generic point of $f^{-1}(Z)$ (necessarily of codimension $d$ on $Y$\NB{locally $Z$ is cut out by $d$ equations, so codimension $\le d$}).
Shrinking $X$ around $f(\eta)$ using Remark \ref{rmk:RS-basic}, we may assume that the normal bundle $N_{Z/X}$ is trivial.
Since $f: (Y, f^{-1}(Z)) \to (X, Z)$ is a morphism of smooth closed pairs \cite[\S3.5]{hoyois-equivariant}, the map $Y/Y \setminus f^{-1}(Z) \to X/X \setminus Z$ is equivalent to $Th(g)$, where $g: N_{f^{-1}(Z)/Y} \to N_{Z/X}$ is the map induced by $f$ \cite[Theorem 3.23]{hoyois-equivariant}.
Our assumptions on codimension imply that $f^* N_{Z/X} \wequi N_{f^{-1}(Z)/Y}$\NB{$Z$ is locally cut out by $d$ equations, which generated both normal bundles}, whence $g \wequi f|_{f^{-1}(Z)} \times \id_{\A^d}$.
Pullback along $Th(g) \wequi (f|_{f^{-1}(Z)})_+ \wedge T^d$ thus only depends on $M_{-d}$, and the result follows.
\end{proof}

Recall that for (sets, say, and hence presheaves of sets) $A \subset X, B \subset Y$, we have a canonical isomorphism \begin{equation}\label{eq:smash-formula} X/A \wedge Y/B \wequi X \times Y/(A \times Y \cup X \times B). \end{equation}
\begin{construction}
Let $Z \subset X \times \P^1$ be closed with image $Z'$ in $X$.\NB{We only ever apply this with $Z \subset \A^1_X$...}
Applying the isomorphism \eqref{eq:smash-formula} with $Y = \P^1$, $A=X \setminus Z'$, $B = \emptyset$ we obtain the following equivalence \[ X \times \P^1/(X \times \P^1 \setminus Z' \times \P^1) \wequi (X/X \setminus Z') \wedge \P^1_+. \]
Together with the stable splitting $\P^1_+ \wequi \1 \vee \P^1$ and extension of support this induces a map \[ \tr_Z: H^{d}_{Z}(X \times \P^1, M) \to H^{d}_{Z' \times \P^1}(X \times \P^1, M) \to H^{d-1}_{Z'}(X, M_{-1}). \]
\end{construction}
\begin{remark}\label{rmk:construction-functorial}
This construction is clearly functorial in $X$.
\end{remark}

\begin{lemma} \label{lemm:construction-is-transfer}
In the above notation, suppose that $Z \subset X \times \P^1$ has codimension $\ge d$ (so that $Z' \subset X$ has codimension $\ge d-1$).
Then $\tr_Z$ only depends on $M_{-d}$.
\end{lemma}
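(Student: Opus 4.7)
The plan is to factor $\tr_Z$ and identify the resulting map on the Rost--Schmid complex as a sum of monogeneic transfers, each of which manifestly depends only on $M_{-d}$ with its $\A^1$-transfer structure.

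First, $\tr_Z$ factors as the extension of support $e: H^d_Z(X \times \P^1, M) \to H^d_{Z' \times \P^1}(X \times \P^1, M)$ followed by a residual map $\tr_{Z'}: H^d_{Z' \times \P^1}(X \times \P^1, M) \to H^{d-1}_{Z'}(X, M_{-1})$ depending only on $Z'$, not on $Z$. Now $e$ is a morphism of Rost--Schmid complexes (Remark~\ref{rmk:cousin-support} and Theorem~\ref{thm:morel}), so it already depends only on $M_{-d}$, and it suffices to treat $\tr_{Z'}$. By Remark~\ref{rmk:RS-basic} and the functoriality of the construction (Remark~\ref{rmk:construction-functorial}), I may replace $X$ by its localization at a generic point $z$ of $Z'$ of codimension $d-1$ in $X$. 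After this reduction $X$ is essentially smooth local with closed point $z$, $Z' = \{z\}$, and $Z' \times \P^1 = \P^1_{k(z)}$ is the closed fibre of the projection $p: X \times \P^1 \to X$. The codimension-$d$ points of $X \times \P^1$ lying in this fibre are then precisely the closed points of $\P^1_{k(z)}$, so the Rost--Schmid complex gives
\[ C^d_{Z' \times \P^1}(X \times \P^1, M) = \bigoplus_{a \in (\P^1_{k(z)})^{(1)}} M_{-d}(k(a), \omega_{a/X \times \P^1}), \qquad C^{d-1}_{Z'}(X, M_{-1}) = M_{-d}(k(z), \omega_{z/X}), \]
into the latter of which $H^{d-1}_{Z'}(X, M_{-1})$ embeds.

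The key claim is that on the $a$-summand, $\tr_{Z'}$ agrees---after the canonical identification of twists coming from the short exact sequence $0 \to N_{a/\P^1_{k(z)}} \to N_{a/X \times \P^1} \to p^*N_{z/X}|_a \to 0$---with the transfer $\tr_a$ of \S\ref{subsec:transfer} applied to the contraction sheaf $M_{-d} = (M_{-(d-1)})_{-1}$, additionally twisted by $\omega_{z/X}$. To see this I unpack the construction: homotopy purity and~\eqref{eq:smash-formula} identify
\[ (X \times \P^1)/(X \times \P^1 \setminus z \times \P^1) \wequi Th(N_{z/X}) \wedge \P^1_+, \]
under which the $a$-component of the extension of support becomes $Th(N_{z/X}) \wedge (\P^1/(\P^1 \setminus a)) \to Th(N_{z/X}) \wedge \P^1_+$, and the stable splitting $\P^1_+ \to \P^1$ recovers exactly the collapse map defining $\tr_a$. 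Since $\tr_a$ is by definition built from the monogeneic transfer on $M_{-d}$, the conclusion follows.

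The main obstacle will be the bookkeeping required to identify $\tr_{Z'}$ literally (not merely up to non-canonical isomorphism) with $\tr_a$ on each summand. This reduces to checking compatibility between the splitting of the normal-bundle exact sequence above and the twists used implicitly in both the Rost--Schmid formalism and in the definition of the monogeneic transfer via homotopy purity; the algebra is essentially forced but requires care.
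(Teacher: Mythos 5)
Your proof is correct and follows essentially the same route as the paper: localize at a generic point of $Z'$ of codimension $d-1$ (using Remarks \ref{rmk:construction-functorial} and \ref{rmk:RS-basic} plus injectivity of the target into its local pieces), identify the collapse map via the purity compatibility of Lemma \ref{lemm:collapse} as a Thom space smashed with the collapse onto a closed point of $\P^1_{Z'}$, and recognize the latter as the monogeneic transfer of \S\ref{subsec:transfer}. The twist bookkeeping you flag as the remaining obstacle is dispatched in the paper by additionally trivializing $N_{Z'/X}$ after localizing (legitimate by the same injectivity argument), so that the map becomes literally $\id_{\A^{d-1}/\A^{d-1}\setminus 0} \wedge t$ with $t$ the defining collapse map of the monogeneic transfer, and no comparison of canonical twist identifications is needed.
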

\begin{proof}
The transfer is given by pullback along the collapse map $\P^1_X/\P^1_X \setminus \P^1_{Z'} \to \P^1_X/\P^1_X \setminus Z$.
Remarks \ref{rmk:construction-functorial} and \ref{rmk:RS-basic} imply that the problem is local on $X$ around generic points of $Z'$ of codimension $d-1$; we may thus assume that $Z'$ is smooth \cite[Tag 0B8X]{stacks} and $N_{Z'/X}$ is trivial.
Lemma \ref{lemm:collapse} below identifies the transfer with the collapse map \[ N_{\P^1_{Z'}/\P^1_X}/N_{\P^1_{Z'}/\P^1_X} \setminus \P^1_{Z'} \to N_{\P^1_{Z'}/\P^1_X}/N_{\P^1_{Z'}/\P^1_X} \setminus Z \]
By assumption $N_{\P^1_{Z'}/\P^1_X}$ is trivial of rank $d-1$, so this map identifies with \[ \A^{d-1}_{\P^1_{Z'}} / \A^{d-1}_{\P^1_{Z'}} \setminus \P^1_{Z'} \to \A^{d-1}_{\P^1_{Z'}} / \A^{d-1}_{\P^1_{Z'}} \setminus Z. \]
Applying isomorphism \eqref{eq:smash-formula} with $X = \A^{d-1}$, $Y = \P^1_{Z'}$, $A = \A^{d-1} \setminus 0$ and respectively $B=\emptyset$ or $B=\P^1_{Z'} \setminus Z$, this identifies with \[ \A^{d-1}/\A^{d-1} \setminus 0 \wedge \P^1_{Z'+} \xrightarrow{\id \wedge t} \A^{d-1}/\A^{d-1} \setminus 0 \wedge \P^1_{Z'}/\P^1_{Z'} \setminus Z. \] (Use that $(\A^{d-1} \setminus 0) \times \P^1_{Z'} = \A^{d-1}_{\P^1_{Z'}} \setminus \P^1_{Z'}$ and $(\A^{d-1} \setminus 0) \times \P^1_{Z'} \cup \A^{d-1} \times (\P^1_{Z'} \setminus Z) = \A^{d-1}_{\P^1_{Z'}} \setminus Z$.)
Pullback along $t$ is the monogeneic transfer for $Z/Z'$, essentially by definition (see \S\ref{subsec:transfer}).
The result follows.
\end{proof}
\begin{remark} \label{rmk:construction-is-transfer}
The above proof shows that, on the level of the Rost--Schmid complex, the map $\tr_Z$ is given as follows.
For $z \in Z$ of codimension $e$ in $X \times \P^1$ and with image $z'$ of codimension $e-1$ in $X$, the map is given in components by \[ C^e(X \times \P^1, M) \supset M_{-e}(z, \omega_{z/X \times \P^1}) \wequi M_{-e}(z, \omega_{z/\P^1_{z'}} \otimes \omega_{\P^1_{z'}/\P^1_X}) \xrightarrow{\tr} M_{-e}(z', \omega_{z'/X}) \subset C^{e-1}(X, M_{-1}). \]
Here $\tr$ is the monogeneic transfer coming from the embedding $z \in \P^1_{z'}$.\NB{And if $z'$ has codimension $\ne e$, then the component is zero.}
\end{remark}

We used above the following form of the homotopy purity equivalence.
\begin{lemma} \label{lemm:collapse}
Let $Z \subset Y \subset X$ be closed immersions with $X, Y$ smooth.
Then the collapse map \[ X/(X \setminus Y) \to X/(X \setminus Z) \] is canonically homotopic to the collapse map \[ N_{Y/X}/N_{Y/X} \setminus Y \to N_{Y/X}/N_{Y/X} \setminus Z. \]
\end{lemma}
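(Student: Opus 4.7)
The plan is to run the standard deformation to the normal cone, but in a form that tracks the (possibly non-smooth) closed subscheme $Z \subset Y$ throughout. Set
\[ D := D(Y,X) = \mathrm{Bl}_{Y \times 0}(X \times \A^1) \setminus \mathrm{Bl}_{Y \times 0}(X \times 0). \]
Then $D$ is smooth over $k$, comes with a map $\pi : D \to \A^1$ satisfying $\pi^{-1}(1) = X$ and $\pi^{-1}(0) = N_{Y/X}$, and the immersion $Y \hookrightarrow X$ extends to a closed immersion $Y \times \A^1 \hookrightarrow D$ compatible with $\pi$, restricting to the zero section $Y \hookrightarrow N_{Y/X}$ at $0$. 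Pulling this back along $Z \hookrightarrow Y$ gives a closed immersion $Z \times \A^1 \hookrightarrow D$ with the analogous property.

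The main step is to assemble the diagram in $\Spc(k)_*$
\[
\begin{CD}
X/(X \setminus Y) @>{i_1^*}>> D/(D \setminus Y \times \A^1) @<{i_0^*}<< N_{Y/X}/(N_{Y/X} \setminus Y) \\
@VVV @VVV @VVV \\
X/(X \setminus Z) @>{i_1^*}>> D/(D \setminus Z \times \A^1) @<{i_0^*}<< N_{Y/X}/(N_{Y/X} \setminus Z)
\end{CD}
\]
where the vertical maps are extension of support along $Z \subset Y$ and the horizontal ones are restrictions to the fibers over $0,1 \in \A^1$. The top row consists of $\A^1$-equivalences by homotopy purity for the smooth pairs $(X,Y)$, $(D, Y \times \A^1)$ and $(N_{Y/X}, Y)$ (cf.\ \S\ref{subsec:htpy-purity}). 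The identification the lemma asks for will drop out of commutativity, once we know the bottom row consists of $\A^1$-equivalences as well.

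The crucial point is therefore to show that the bottom row maps are $\A^1$-equivalences, despite $Z$ not being smooth. I would argue this by passing to the horizontal cofibers of the vertical arrows: the cofiber of the first column is $(X \setminus Z)/(X \setminus Y)$ (up to the usual shift), and similarly in the middle and right columns. Observe that $(X \setminus Z, Y \setminus Z)$ is now a \emph{smooth} closed pair, that $D \setminus (Z \times \A^1)$ is exactly its deformation to the normal cone $D(Y \setminus Z, X \setminus Z)$ (this is a base-change/blow-up compatibility that one verifies directly), and that its normal bundle is $N_{Y/X}|_{Y \setminus Z} = N_{Y/X} \setminus Z$. Thus homotopy purity applied to the smooth pair $(X \setminus Z, Y \setminus Z)$ gives that the induced cofiber row consists of $\A^1$-equivalences, and by the cofiber long exact sequence (two out of three) the bottom row of the original diagram also consists of $\A^1$-equivalences. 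Chasing the resulting square gives the claimed canonical homotopy between the two collapse maps.

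The part I expect to be slightly finicky, rather than conceptually hard, is the verification of the blow-up identities $D \setminus (Z \times \A^1) = D(Y \setminus Z, X \setminus Z)$ and the corresponding identification of normal bundles; this is routine from the universal property of blow-ups but is where one must actually touch scheme-theoretic material. Once that is in hand, everything else is a formal consequence of homotopy purity plus the two-out-of-three property of $\A^1$-equivalences in cofiber sequences.
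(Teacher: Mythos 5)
Your argument is correct, and it is in essence the paper's argument with the black box opened: the paper writes $X/(X\setminus Z)$ as the cofiber of $(X\setminus Z)/(X\setminus Y)\to X/(X\setminus Y)$ and then simply cites Hoyois for the fact that the open immersion of smooth closed pairs $(X\setminus Z,\,Y\setminus Z)\hookrightarrow(X,Y)$ is compatible with purity equivalences; your deformation-space diagram and the two-out-of-three argument on cofibers is exactly how that compatibility is proved, so you are re-deriving the cited input by hand. One point in the step you yourself flag as finicky is actually stated incorrectly: $D\setminus(Z\times\A^1)$ is \emph{not} equal to $D(Y\setminus Z,\,X\setminus Z)$. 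The latter is $D\times_{X\times\A^1}((X\setminus Z)\times\A^1)$, whose fiber over $0$ is $N_{Y/X}\setminus N_{Y/X}|_Z$ (the normal bundle restricted to $Y\setminus Z$), whereas the fiber of $D\setminus(Z\times\A^1)$ over $0$ is $N_{Y/X}\setminus Z$, i.e.\ only the zero section over $Z$ has been removed. The argument survives because the two opens contain the same closed subset $(Y\setminus Z)\times\A^1$ and agree in a neighborhood of it, so the relevant quotients $(D\setminus Z\times\A^1)/(D\setminus Y\times\A^1)$ and $D(Y\setminus Z, X\setminus Z)/(D(Y\setminus Z,X\setminus Z)\setminus(Y\setminus Z)\times\A^1)$ are identified by Zariski excision (and likewise for the right-hand column, where $(N_{Y/X}\setminus Z)/(N_{Y/X}\setminus Y)$ must be compared with the Thom-type quotient of $N_{Y/X}|_{Y\setminus Z}$). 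With that excision step inserted, the cofiber row is the purity row for the smooth pair $(X\setminus Z, Y\setminus Z)$ and the rest of your argument goes through.
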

\begin{proof}
Write $X' = X \setminus Z$ and $Y' = Y \setminus Z$.
We can write the collapse map as \[ \frac{X}{X \setminus Y} \to \frac{X/X \setminus Y}{X'/X' \setminus Y'}. \]
Since $(X', Y') \to (X, Y)$ is a morphism of smooth closed pairs, it is compatible with purity equivalences \cite[after proof of Theorem 3.23]{hoyois-equivariant}, and so the collapse map identifies with \[ \frac{N_{Y/X}}{N_{Y/X} \setminus Y} \to \frac{N_{Y/X}/(N_{Y/X} \setminus Y)}{N_{Y'/X'}/(N_{Y'/X'} \setminus Y')} \wequi \frac{N_{Y/X}}{(N_{Y/X} \setminus Y) \cup N_{Y'/X'}} = \frac{N_{Y/X}}{N_{Y/X} \setminus Z}; \] see also \cite[top of p. 24]{hoyois-equivariant}.
This is the desired result.
\end{proof}

\begin{lemma} \label{lemm:pullback-transfer}
Let $X$ be (essentially) smooth, $i: Y \hookrightarrow X$ closed of codimension $1$ with $Y$ essentially smooth, $Z \subset X \times \P^1$ of codimension $\ge d$ such that $W := (Y \times \P^1) \cap Z$ also has codimension $\ge d$ on $Y \times \P^1$.
Write $Z', W'$ for the images of $Z$ and $W$ in $X, Y$, respectively.
Let $\eta_1, \dots, \eta_r$ be the generic points of $W$ of codimension $d$.
Suppose further that $Z \to Z'$ is quasi-finite\NB{i.e. finite} and $W \to W'$ is birational at $\eta_1$.
Then \[ i^*: H^d_Z(X \times \P^1, M) \to H^d_{W}(Y \times \P^1, M) \] only depends on $M_{-d}$, the map \[ i^*: H^{d-1}_{Z'}(X, M_{-1}) \to H^{d-1}_{W'}(Y, M_{-1}) \] and the maps \[ i^*_{\eta_j}: H^d_Z((X \times \P^1)_{\eta_j}, M) \to H^d_{W}((Y \times \P^1)_{\eta_j}, M), \] for $j > 1$.
\end{lemma}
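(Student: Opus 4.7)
The plan is to use the Rost--Schmid injection
\[ H^d_W(Y \times \P^1, M) \hookrightarrow \bigoplus_{j=1}^r H^d_{\eta_j}((Y \times \P^1)_{\eta_j}, M) \]
from Remark \ref{rmk:RS-basic} to reduce the problem to determining, for each $j$, the composite $\pi_j \circ i^*$ with projection onto the $\eta_j$-component. For $j > 1$ this composite is given by the hypothesized data; for $j = 1$ I will recover $\pi_1 \circ i^*$ from the transfer square of Remark \ref{rmk:construction-functorial}, using birationality to invert the relevant monogeneic transfer.

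For $j > 1$: Naturality of pullback and of the Rost--Schmid complex under localization identifies $\pi_j \circ i^*$ with the composite
\[ H^d_Z(X \times \P^1, M) \to H^d_Z((X \times \P^1)_{\eta_j}, M) \xrightarrow{i^*_{\eta_j}} H^d_W((Y \times \P^1)_{\eta_j}, M) \wequi H^d_{\eta_j}((Y \times \P^1)_{\eta_j}, M); \]
the first arrow is open restriction (depends only on $M_{-d}$) and the second is a hypothesized datum.

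For $j = 1$, Remark \ref{rmk:construction-functorial} gives the commutative square
\[
\begin{CD}
H^d_Z(X \times \P^1, M) @>{i^*}>> H^d_W(Y \times \P^1, M) \\
@V{\tr_Z}VV @VV{\tr_W}V \\
H^{d-1}_{Z'}(X, M_{-1}) @>{i^*_{M_{-1}}}>> H^{d-1}_{W'}(Y, M_{-1}).
\end{CD}
\]
Let $\eta_1' \in Y$ denote the image of $\eta_1$; by birationality $k(\eta_1') = k(\eta_1)$ and $\codim_Y \eta_1' = d-1$. Post-composing with the localization $H^{d-1}_{W'}(Y, M_{-1}) \to H^{d-1}_{\eta_1'}(Y_{\eta_1'}, M_{-1})$ and applying Remark \ref{rmk:construction-is-transfer}, one obtains a componentwise decomposition
\[ \pi_{\eta_1'} \circ \tr_W = \sum_{j \,:\, \eta_j' = \eta_1'} \tau_{\eta_j} \circ \pi_j, \]
where $\tau_{\eta_j}$ is the monogeneic transfer for the embedding $\eta_j \hookrightarrow \P^1_{\eta_j'}$. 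Because $\eta_1$ is a rational point of $\P^1_{\eta_1'}$, the transfer $\tau_{\eta_1}$ is an isomorphism (see \S\ref{subsec:transfer}). Solving for $\pi_1 \circ i^*$ in the square yields
\[ \pi_1 \circ i^* = \tau_{\eta_1}^{-1} \Bigl( \pi_{\eta_1'} \circ i^*_{M_{-1}} \circ \tr_Z - \sum_{\substack{j > 1 \\ \eta_j' = \eta_1'}} \tau_{\eta_j} \circ \pi_j \circ i^* \Bigr). \]
Every term on the right depends only on $M_{-d}$ and on data listed in the lemma: $\tr_Z$ by Lemma \ref{lemm:construction-is-transfer}; $i^*_{M_{-1}}$ by hypothesis; $\pi_{\eta_1'}$ and the transfers $\tau_{\eta_j}$ belong to the Rost--Schmid formalism for $M_{-d}$; and $\pi_j \circ i^*$ for $j > 1$ is already determined by the previous paragraph.

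The main obstacle will be verifying carefully that Remark \ref{rmk:construction-is-transfer} localizes as claimed at $\eta_1'$ and that $\tau_{\eta_1}$ really becomes invertible from the birationality assumption (this is the essential use of the hypothesis); once these two points are in hand, the rest is formal bookkeeping with Remarks \ref{rmk:RS-basic}, \ref{rmk:construction-functorial} and \ref{rmk:construction-is-transfer}.
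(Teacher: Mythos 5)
Your proposal is correct and follows essentially the same route as the paper: determine the components of $i^*(a)$ at $\eta_j$ for $j>1$ from the given local data, then recover the $\eta_1$-component from the commutative square relating $i^*$ to $\tr_Z$, $\tr_W$ and the known map on $H^{d-1}$, using that birationality at $\eta_1$ makes the relevant monogeneic transfer (for a rational point of $\P^1_{\eta_1'}$) invertible, hence $\tr_W$ injective on that component. The paper phrases this as "knowing $\tr_W(b_1)$ determines $b_1$" rather than writing out the explicit inversion formula, but the argument is the same.
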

In particular, the map $i^*$ does \emph{not} depend on $i_1^*$.
\begin{proof}
By Remarks \ref{rmk:construction-functorial} and \ref{rmk:construction-is-transfer}, we have a commutative diagram
\begin{equation*}
\begin{tikzcd}
H^d_Z(X \times \P^1, M) \ar[r, "i^*"] \ar[d, "\tr_Z"] & H^d_{W}(Y \times \P^1, M) \ar[d, "\tr_W"] \ar[r, hookrightarrow] & \bigoplus_i C^d_{\eta_i}(Y \times \P^1, M) \ar[d, "\tr_W"] \\
H^{d-1}_{Z'}(X, M_{-1}) \ar[r, "i^*"]                 & H^{d-1}_{W'}(Y, M_{-1}) \ar[r, hookrightarrow] & \bigoplus_{\eta \in W' \cap Y^{(d-1)}} C^{d-1}_\eta(Y, M_{-1}).
\end{tikzcd}
\end{equation*}
By Lemma \ref{lemm:construction-is-transfer}, the vertical maps only depend on $M_{-d}$, and it follows from Remark \ref{rmk:construction-is-transfer} and our assumption that $W \to W'$ is birational at $\eta_1$ that the right hand vertical map is injective on the component corresponding to $\eta_1$.
Let $a \in H^d_Z(X \times \P^1, M)$.
Write $i^*(a) = b_1 + \dots + b_r$, where $b_i \in C^d_{\eta_i}(Y \times \P^1, M)$.
For $j>1$, we know $i_{\eta_j}^*$, hence we know $b_j$ and thus we know $\tr_W(b_j)$.
Since we know the bottom horizontal map, we know $i^* \tr_Z(a) = \tr_W(i^*(a))$.
Consequently we know $\tr_W(b_1) = \tr_W(i^*a) - \sum_{j>1} \tr_W(b_j)$, and hence $b_1$.
This concludes the proof.
\end{proof}
\begin{example} \label{ex:induction-start}
If $d=1$, then the map $i^*: H^{d-1}_{Z'}(X, M_{-1}) \subset M_{-1}(X) \to H^{d-1}_{W'}(Y, M_{-1}) \subset M_{-1}(Y)$ clearly only depends on $M_{-1}$, as desired.
\end{example}
\begin{example}
If $Z$ is smooth and transverse to $Y$ at $\eta_j$ for $j>1$, then $i^*_{\eta_j}$ only depends on $M_{-d}$ by Lemma \ref{lemm:transverse-pullback}, as desired.
\end{example}

The following is the key reduction.
It is an adaptation of \cite[Lemma 7.2]{levine2010slices}.
\begin{lemma} \label{lemm:key}
Let $X, Y$ be (essentially) smooth, $i: Y \hookrightarrow X$ closed of codimension $1$, $Z \subset X$ of codimension $\ge d$ such that $W = Y \cap Z$ is of codimension $\ge d$ in $Y$.
Then \[ i^*: H^d_Z(X, M) \to H^d_W(Y, M) \] only depends on $M_{-d}$.
\end{lemma}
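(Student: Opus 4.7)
The plan is induction on $d \ge 1$, with the inductive step given by a Gabber-type presentation lemma that combines with Lemma \ref{lemm:pullback-transfer} to reduce codimension-$d$ pullbacks on $M$ to codimension-$(d-1)$ pullbacks on $M_{-1}$. The base of the induction ($d=1$) uses the same mechanism, with the residual $M_{-1}$-level pullback handled tautologically by Example \ref{ex:induction-start}.

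By Remark \ref{rmk:RS-basic}, I would fix a codimension-$d$ generic point $\eta_1$ of $W$ and work Nisnevich-locally around $i(\eta_1) \in X$. The geometric ingredient is a variant of Gabber's presentation lemma in the spirit of \cite[Lemma 7.2]{levine2010slices}: after Nisnevich-local shrinking, produce a smooth morphism $\phi: X \to X_0$ of relative dimension $1$ such that
\begin{enumerate}[(i)]
\item $Y = \phi^{-1}(Y_0)$ for a smooth divisor $Y_0 \hookrightarrow X_0$, so that $i$ is the base change of $i_0: Y_0 \hookrightarrow X_0$ along $\phi$;
\item $Z$ is quasi-finite over $X_0$, so that (after compactifying the fibers of $\phi$) we may regard $Z$ as a closed subset of $X_0 \times \P^1$ of codimension $\ge d$;
\item the map $W \to W' := \phi(W)$ is birational at $\eta_1$, while at every other codimension-$d$ generic point $\eta_j$ ($j>1$) of $W$ the scheme $Z$ is smooth and meets $Y$ transversely.
\end{enumerate}

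Given such a presentation, Lemma \ref{lemm:pullback-transfer} applies (with $(X_0, Y_0, Z)$ in the roles of $(X, Y, Z)$ there) and expresses $i^* = (i_0 \times \id)^*: H^d_Z(X, M) \to H^d_W(Y, M)$ in terms of (a) data depending only on $M_{-d}$; (b) the pullback $i_0^*: H^{d-1}_{Z'}(X_0, M_{-1}) \to H^{d-1}_{W'}(Y_0, M_{-1})$, which for $d \ge 2$ depends only on $(M_{-1})_{-(d-1)} = M_{-d}$ by the inductive hypothesis applied to the sheaf $M_{-1}$, and for $d=1$ is just restriction on $M_{-1}$ by Example \ref{ex:induction-start}; and (c) the pullbacks $i^*_{\eta_j}$ for $j > 1$, which by (iii) and Lemma \ref{lemm:transverse-pullback} depend only on $M_{-d}$. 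This closes the induction.

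The main obstacle is items (i)--(iii) of the presentation lemma, i.e.\ producing a smooth projection $\phi: X \to X_0$ that is simultaneously compatible with the divisor $Y$ (so $Y$ is ``vertical'' and $i$ arises by base change), makes $Z$ quasi-finite over $X_0$, and arranges transversality of $Z$ to $Y$ away from the chosen point $\eta_1$. Levine's argument in \cite[Lemma 7.2]{levine2010slices} provides the template, but the joint incidence of $Y$, $Z$, and the chosen point $\eta_1$ requires a careful choice of coordinates generalizing Gabber's classical lemma; this is where the perfectness of $k$ and the smoothness hypotheses will be used.
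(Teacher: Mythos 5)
Your skeleton is the same as the paper's: induction on $d$ driven by Lemma \ref{lemm:pullback-transfer}, with the auxiliary points handled by Lemma \ref{lemm:transverse-pullback} and the base case by Example \ref{ex:induction-start}. However, the presentation lemma you postulate in (i)--(iii) is where essentially all of the work lies, and as formulated it has a genuine gap: conditions (i) and (ii) cannot in general be arranged simultaneously with $Y$ exactly vertical. The paper's device is to write $Y = Z(f)$ locally and take as first coordinate $\phi_1 = uf$, where $u$ restricts on the curve $Z$ to a function with $u(w) \ne 0$, no double zeros, and $uf|_Z$ finite (Lemma \ref{lemm:technical}, which requires compactifying $Z$ and a Riemann--Roch argument). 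Then the preimage of the divisor downstairs is $Z(uf)$, which agrees with $Y$ only on the open set $D(u)$, and a further excision step is needed to repair this. Moreover, the extra points $\eta_j$, $j>1$, are not pre-existing components of $W$ but are \emph{created} by the zeros of $u$ on $Z$; their transversality in your (iii) is not a general-position afterthought but is exactly the condition that $u$ have simple zeros, built into the choice of $u$.

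A second missing ingredient is the chain of reductions that makes these curve-theoretic and general-position arguments available at all. The paper first fibers $X$ étale-locally over $W$ itself and passes to the fiber over a chosen point (Lemma \ref{lemm:nbd}), reducing to the case where $W$ is a single rational point and $\dim Z = 1$, $\dim X = d+1$; and in positive characteristic it must first base-change the entire problem along $k \subset k(t)$ so that the base field is infinite (needed for all ``general linear projection'' arguments, cf.\ Lemma \ref{lemm:general-basis}) and so that $Z$, being geometrically reduced over a perfect field, becomes \emph{generically smooth} over the new base --- without generic smoothness of $Z$ the point count of smooth transverse intersections in (iii) breaks down. So the strategy is right and matches the paper, but the presentation lemma needs both a weaker (correct) formulation and a substantial proof; as written, your step (i) would fail.
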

\begin{proof}
By a continuity argument, we may assume that $X$ is smooth over $k$.

Using Remark \ref{rmk:RS-basic}, we may shrink $X$ around a generic point of $W$.
We may thus assume that $W$ is smooth over $k$, and connected.
Pullback along the smooth map $X \times \A^1 \to X$ yields an understood isomorphism $H^d_Z(X, M) \to H^d_{Z \times \A^1}(X \times \A^1, M)$, functorial in $X$.
It hence suffices to understand pullback along $i \times \A^1$.
Let $w=Spec(F)$ be a generic point of $W \times \A^1$ of codimension $d$ on $Y \times \A^1$.
Then $w$ lies over the generic point of $\A^1$ \cite[Tag 0CC1]{stacks}.
We may thus (using Remark \ref{rmk:RS-basic} again) pass to the generic fiber over $Spec(k(t)) \in \A^1$; essentially we have base changed the entire problem to $k(t)/k$.
Let us denote the base change of $X$ by $X_1$, and so on.
Since $Z$ is geometrically reduced over $k$ \cite[Tag 020I]{stacks}, its base change $Z_1$ is geometrically reduced over $k(t)$ \cite[Tag 0384]{stacks}.
Lemma \ref{lemm:nbd} below supplies us with an étale neighborhood $X_2 \to X_1$ of $w$ and a smooth map $X_2 \to W_1$ such that $Z_2 \to X_2 \to W_1$ is generically smooth and $Y_2 \to W_1$ is smooth.
Let $X_3 = X_2 \times_{W_1} \{w\}$.
Our base changes are illustrated in the following diagram
\begin{equation*}
\begin{CD}
X_3 @>>> X_2 @>>> X_1 = X \otimes k(t) @>>> X \times \A^1 @>>> X \\
@VVV     @VVV \\
w @>>> W_1 = W \otimes k(t).
\end{CD}
\end{equation*}
By construction $X_3 \to X_1$ is a pro-(étale neighborhood) of $w \in X \otimes k(t)$, and so (using again Remark \ref{rmk:RS-basic}) we may replace $X \otimes k(t)$ by $X_3$.

With these preparatory constructions out of the way, we rename $X_3$ to $X$, $Y_3$ to $Y$ and $Z_3$ to $Z$.
We now have a smooth map $X \to Spec(F)$, where $F$ is infinite (since it contains $k(t)$), $Y \to Spec(F)$ is smooth and $Z \to Spec(F)$ is generically smooth.\NB{The main point of all of these contortions is that in positive characteristic, $Z$ need not be generically smooth over $F$ unless we are careful.}
Also $W = \{w\}$ is an $F$-rational point of $X$, $\dim X = d+1$, $\dim Y = d$ and $\dim Z = 1$.
Shrinking $X$ if necessary may assume that $Y \subset X$ is principal, say cut out by $f \in \scr O(X)$, that every component of $Z$ meets $w$, that $Z$ is smooth away from $w$\NB{being a generically smooth curve, its set of singular points is finite, and so $Z_{sing} \setminus w$ is closed in $X$}, and that $X$ is affine.\NB{Each of these conditions except the last one holds in some open neighborhood of $w$, and their non-empty open intersection contains an affine open of $X$ containing $w$.}

Lemma \ref{lemm:technical} below supplies us with $\bar u: Z \to \A^1$ with $\bar u(w) \ne 0$, $\bar u f: Z \to \A^1$ finite and $\bar u$ having no double roots.
Pick $u \in \scr O(X)$ reducing to $\bar u \in \scr O(Z)$.\NB{possible since $X$ affine}
Then $\phi_1 := uf: X \to \A^1$ is finite when restricted to $Z$, satisfies $\phi_1(w) = 0$, and we claim that $\phi_1$ is smooth at all points of $\phi_1^{-1}(0) \cap Z$.
Note that by construction $(u,f)$ have no common root on $Z$ ($w$ being the only root of $f$ on $Z$), and neither do $(u,du)$ ($u$ not having double roots on $Z$) or $(f, df)$ ($Z(f)$ being smooth).
It follows that $d(uf) = udf + fdu$ does not vanish at points $p \in Z$ with $(uf)(p) = 0$, proving the claim.

We may thus apply Lemma \ref{lemm:gabber'} below to obtain $\phi_2, \dots, \phi_{d+1}: X \to \A^1$ such that $\phi = (\phi_1, \dots, \phi_{d+1}): X \to \A^{d+1}$ is étale at all points of $\phi_1^{-1}(0) \cap Z$, has $\phi(w) = 0$, and there exists an open neighborhood $0 \in U \subset \A^{d}_F$ such that $Z_U \xrightarrow{\phi} \A^1_U$ is a closed immersion (in fact $U=U_1 \times \A^{d-1}$).
The non-étale locus of $\phi$ meets $Z$ in finitely many points (namely a closed subset not containing $w$, and hence no component of the curve $Z$), none of which map to $0$ under $\phi_1$.
Shrinking $U$ further, we may thus assume that $Z_U$ is contained in the étale locus $V$ of $\phi$.
Since $Z_U \wequi \phi(Z_U) \to \phi^{-1}(\phi(Z_U)) \cap V$ is a section of a separated unramified morphism it is clopen \cite[Tag 024T]{stacks}, i.e. we have $\phi^{-1}(\phi(Z_U)) \cap V = Z_U \coprod Z'$ with $Z'$ closed in $V$.

Let $D = D(u)$; this is an open neighborhood of $w$ in $X$.
Note that $Z(uf) \cap D = Y \cap D$.
Let \begin{gather*} X' = (\phi^{-1}(\A^1_U) \cap V) \setminus Z', \\ U_0 = U \cap (\{0\} \times \A^{d-1}) \text{ and } Y' = \phi^{-1}(\A^1_{U_0}) \cap X' = Z(uf) \cap X'. \end{gather*}
We have a commutative diagram of schemes
\begin{equation*}
\begin{CD}
X @<i<< Y @<<< Y \cap D \cap X' \\
@AAA    @.       @|     \\
X' @<<< Y' @<<< Y' \cap D \\
@V{\phi}VV @V{\psi}VV \\
\A^1_U @<j<< \A^1_{U_0}.
\end{CD}
\end{equation*}
Here $j$ is the canonical closed immersion, and $\psi$ is the restriction (i.e. base change) of $\phi$.
In particular $\psi$ is étale and $Y'$ is smooth.
By construction, $\phi$ is an étale neighborhood of $Z_U$, and $Z_U \xrightarrow{\phi} \A^1_U \to U$ is finite.
There is an induced commutative diagram
\begin{equation*}
\begin{CD}
H^d_Z(X, M) @>{i^*}>> H^d_W(Y, M) @>>{\wequi}> H^d_W(Y \cap D \cap X', M) \\
@VoVV                   @.               @|                \\
H^d_{Z_U}(X', M) @>>> H^d_{Z_U \cap Y'}(Y', M) @>o>> H^d_W(Y' \cap D, M) \\
@A{\phi^*}A{\wequi}A               @A{\psi^*}A{\wequi}A \\
H^d_{Z_U}(\A^1_U, M) @>{j^*}>> H^d_{Z_U \cap Z(uf)}(\A^1_{U_0}, M).
\end{CD}
\end{equation*}
We need to understand the top left hand horizontal map.
All the labelled isomorphisms are pullback along étale maps, and isomorphisms by excision.
The two maps labelled $o$ are also pullback along étale morphisms (in fact open immersions), and hence understood.

We have thus reduced to understanding $j^*$; we rename $Z_U$ to $Z$ and $Z_U \cap Z(uf)$ to $V$.
Since $Z$ is finite over $U$, it remains closed in $\P^1_U$, and hence by a further excision argument it suffices to understand \[ \bar j^*: H^d_Z(\P^1_U, M) \to H^d_V(\P^1_{U_0}, M). \]
We have $V = \{w, z_1, \dots, z_r\}$, and each $z_i$ is a smooth point of $Z$ (since $w$ is the only singular point of $Z$).
Moreover $z_i$ is a smooth point of $V$, since $z_i$ is a simple root of $u$ (since by construction $u$ has no double roots on $Z$).
By Lemma \ref{lemm:transverse-pullback}, the pullback \[ j_s^*: H^d_Z((\P^1_U)_{z_s}, M) \to H^d_V((\P^1_{U_0})_{z_s}, M) \] only depends on $M_{-d}$.
Thus applying Lemma \ref{lemm:pullback-transfer}\NB{$\eta_1 = w$ is a rational point, so maps birationally to its image}, it suffices to understand \[ k^*: H^{d-1}_{Z'}(U, M_{-1}) \to H^{d-1}_{V'}(U_0, M_{-1}); \] here $Z'$ and $V'$ are the images of $Z$ and $V$ in $U$.
If $d=1$ we are done by Example \ref{ex:induction-start}.
The general case (i.e. $d>1$) now follows by induction (i.e. restart the argument with $(U, U_0, Z')$ in place of $(X,Y,Z)$).
\end{proof}

\begin{lemma} \label{lemm:nbd}
Let $X$ be a smooth scheme over an infinite field $K$, $W \subset X, Y \subset X$ smooth closed subschemes, $W \subset Z \subset X$ with $Z \subset X$ closed and $Z$ geometrically reduced over $K$ (but not necessarily smooth).
Let $w \in W \cap Y$ such that $\dim_w W \le \dim_w Y$.
There exists an étale neighborhood $X' \to X$ of $w$ together with a smooth morphism $X' \to W$ such that $Z \times_X X' \to W$ is generically smooth (that is, the smooth locus is dense in the source) and $Y \times_X X' \to W$ is smooth.
\end{lemma}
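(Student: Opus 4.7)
The plan is to construct $X'$ as an étale neighborhood of $w$ carrying a smooth retraction onto $W$, by producing $(f_1, \dots, f_c, g_1, \dots, g_{n-c})$ on $X$ near $w$ that jointly give étale coordinates (with the $f_i$ cutting out $W$) and additionally are generic with respect to $Y$ and $Z$. Here $n = \dim_w X$ and $c = \codim_w(W \subset X)$, so $\dim_w W = n-c$ and by hypothesis $\dim_w Y \ge n-c$. After shrinking $X$ to an affine open neighborhood of $w$, I choose $f_1, \dots, f_c \in \scr O(X)$ generating the ideal of $W$ near $w$ with $df_1, \dots, df_c$ linearly independent at $w$; this is standard because $W \hookrightarrow X$ is a regular closed immersion of smooth schemes.

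Next I pick $g_1, \dots, g_{n-c} \in \scr O(X)$ such that (a) $(dg_i|_W)_i$ spans $\Omega^1_{W/K,w}$; (b) $(dg_i|_Y)_i$ is linearly independent in $\Omega^1_{Y/K,w}$; and (c) $g := (g_i) \colon X \to \A^{n-c}$ is smooth at each generic point of $Z$ lying near $w$. Embedding $X \hookrightarrow \A^N$ as a closed subscheme, each of (a)--(c) becomes an open condition on the $(n-c) \times N$ matrix of $K$-coefficients expressing the $g_i$ as linear combinations of the ambient coordinates. Conditions (a) and (b) are non-empty by smoothness of $W$ and $Y$ at $w$, using $\dim_w Y \ge n-c$. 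Condition (c) is the heart of the argument: it is a Bertini-style genericity statement, non-empty because the geometric reducedness of $Z$ ensures that the residue fields at the generic points of $Z$ are separable over $K$, placing those generic points in the smooth locus of $Z/K$, so that smoothness of $g|_Z$ at such an $\eta$ reduces to a Jacobian non-degeneracy satisfied on a Zariski-open subset of matrices. Since $K$ is infinite, a single choice of $g_i$ satisfies (a), (b), and (c) simultaneously.

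The composite $\phi := (f, g) \colon X \to \A^n$ is then étale at $w$, and $g|_W \colon W \to \A^{n-c}$ is étale at $w$ by (a). Shrinking $X$ and $W$ further, I may assume $\phi$ and $g|_W$ are étale globally, $g|_Y$ is smooth on all of $Y$, and $g|_Z$ is smooth on a dense open of $Z$. I now set
\[ X' := X \times_{\A^{n-c}} W, \]
where the two structure maps to $\A^{n-c}$ are $\phi$ followed by projection, respectively $g|_W$. The first projection $X' \to X$ is étale (a base change of $g|_W$) and the pair $(w, w) \in X'$ lifts $w$ with residue field $\kappa(w)$ (using that $\kappa(w)/\kappa(g(w))$ is separable, so that $\kappa(w) \otimes_{\kappa(g(w))} \kappa(w)$ splits off a copy of $\kappa(w)$ via multiplication), making $X'$ an étale neighborhood of $w$. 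The second projection $X' \to W$ is smooth as a base change of the smooth map $X \to \A^{n-c}$. Smoothness of $Y \times_X X' \to W$ follows by étale base change along $W \to \A^{n-c}$ from smoothness of $g|_Y$, and generic smoothness of $Z \times_X X' \to W$ likewise reflects generic smoothness of $g|_Z$, since étale morphisms preserve and reflect smoothness. The main obstacle, as noted, is the verification of (c); everything else is formal once the coordinates are in place.
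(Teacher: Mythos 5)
Your proof is correct and follows essentially the same route as the paper: choose equations $f_i$ cutting out $W$ and general linear projections $g_j$ (after embedding in $\A^N$, using that $K$ is infinite) that are simultaneously generic for $W$, $Y$, and the smooth locus of the geometrically reduced $Z$, then retract an \'etale neighborhood of $w$ onto $W$. The only cosmetic difference is that the paper invokes D\'eglise's construction for the last step, whereas you write out the fibre product $X \times_{\A^{\dim W}} W$ explicitly; the content is identical.
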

\begin{proof}
We modify \cite[Corollary 5.11]{deglise-regular-base}.\NB{In char. 0, this requires no modification.}
Shrinking $X$ around $w$, we may assume that $X$ is affine and there exist $f_1, \dots, f_d \in \scr O(X)$ such that $W = Z(f_1, \dots, f_d)$ and $W$ has codimension everywhere exactly $d$ in $X$.
Let $\{z_1, \dots, z_r\} \subset Z$ be a choice of smooth point in every component of $Z$, which exist because $Z$ is geometrically reduced \cite[Tag 056V]{stacks}.
Let $\dim X = d+n$.
We claim that there exist $g_1, \dots, g_n \in \scr O(X)$ such that $dg_1, \dots, dg_n$ are linearly independent (over the respective residue fields) in $\Omega_w W, \Omega_w Y$ and $\Omega_{z_i} Z$ for every $i$; we shall prove this at the end.
It follows that $df_1, \dots, df_d, dg_1, \dots, dg_n$ are linearly independent in $\Omega_w X$\NB{Using $0 \to C_{W/X} \to \Omega_X \to \Omega_W \to 0$.}.
Consider the map $F = (f_1, \dots, f_d, g_1, \dots, g_n): X \to \A^{d+n}$.
Let $p: \A^{d+n} \to \A^n$ be the projection to the last $n$ coordinates.
By \cite[17.11.1]{EGAIV}, $F$ is smooth at $w$, $pF|_W$ is smooth at $w$, $pF|_Y$ is smooth at $w$, and $pF|_Z$ is smooth at $z_i$.
In particular $pF|_Z$ is generically smooth.
Shrinking $X$ further around $w$, we may assume that $F, pF|_W$ and $pF|_Y$ are smooth (whence the former two are étale), and of course $pF|_Z$ remains generically smooth.
Applying the construction of \cite[\S5.5, \S5.9]{deglise-regular-base}, we obtain a commutative diagram as follows
\begin{equation*}
\begin{CD}
\Omega @>j>> P @>>> \A^d_W @>>> W \\
@.           @VqVV    @VVV      @V{pF|_W}VV \\
         @. X  @>{F}>> \A^{d+n} @>{p}>> \A^n.
\end{CD}
\end{equation*}
Here both squares are cartesian by definition, $j$ is an open immersion and $qj$ is an étale neighborhood of $W$ (by construction of $\Omega$).
Since $p, F$ and $j$ are smooth so is $\Omega \to W$.
By construction $Z \to X \to \A^n$ is generically smooth, thus so $Z \times_X P \to W$, and hence also $Z \times_X \Omega \to W$.
Similarly $Y \to X \to \A^n$ is smooth and hence so is $Y \times_X \Omega \to W$.
Setting $X' = \Omega$, the result follows.

It remains to prove the claim.
Embed $X$ into $\A^N$, and let $g_1, \dots, g_n$ be linear projections.
By Lemma \ref{lemm:general-basis} below applied to $(\A^N)^* \otimes_K K(w) \subset (\Omega \A^N) \otimes_K K(w) \rightarrow \Omega_w W$, $dg_1, \dots, dg_n$ will be linearly independent in $\Omega_w W$ for general $g_j$, and similarly for $\Omega_w Y$ and $\Omega_{z_i} Z$.
The result follows.
\end{proof}

For completeness, we include a proof of the following elementary fact.
\begin{lemma} \label{lemm:general-basis}
Let $K'/K$ be a finite field extension.
Let $V$ be a finite dimensional $K$-vector space, $V'$ a $K'$-vector space of dimension $\ge n$, and $V_{K'} \to V'$ a surjection.
Write $\A(V)$ for the associated variety over $K$ (isomorphic to $\A^{\dim V}_K$).
There is a non-empty open subset $U$ of $\A(V)^n$ such that any $(v_1, \dots, v_n) \in U(K)$ have $K'$-linearly independent images in $V'$.
In particular, if $F$ is infinite, then $n$ general elements of $V$ are linearly independent in $V'$.
\end{lemma}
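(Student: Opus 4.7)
The plan is to exhibit $U$ explicitly as the complement of a $K$-defined closed subscheme whose defining equations are obtained from the natural ``linear dependence'' minors (which are naively only defined over $K'$) by extracting their components along a $K$-basis of $K'$. There is no real obstacle here --- the statement is essentially a linear-algebra exercise --- but care is needed to descend the condition from $K'$ to $K$.

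Fix a $K$-basis $(e_j)_{j\le m}$ of $V$, so $\A(V)^n \cong \A^{nm}_K$ has coordinates $a_{i,j}$, and choose a $K'$-basis $(f_k)$ of $V'$. The surjection $V_{K'} \to V'$ sends $e_j\otimes 1 \mapsto \sum_k c_{j,k} f_k$ for some $c_{j,k}\in K'$, and the image in $(V')^n$ of the tuple $v_i = \sum_j a_{i,j} e_j$ is encoded by the matrix with entries $b_{i,k} = \sum_j a_{i,j} c_{j,k} \in K'[a_{i,j}]$. Its $K'$-linear independence is the non-vanishing of at least one of the $n\times n$ minors $P_\alpha \in K'[a_{i,j}]$ of $(b_{i,k})$.

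To descend this $K'$-condition to a $K$-condition, pick a $K$-basis $\lambda_1,\dots,\lambda_s$ of $K'$ and expand $P_\alpha = \sum_\gamma \lambda_\gamma Q_{\alpha,\gamma}$ with $Q_{\alpha,\gamma} \in K[a_{i,j}]$. Since the $\lambda_\gamma$ are $K$-linearly independent, for any $a \in K^{nm}$ we have $P_\alpha(a) = 0$ if and only if $Q_{\alpha,\gamma}(a) = 0$ for every $\gamma$. Let $U \subset \A(V)^n$ be the complement of the common vanishing locus of all the $Q_{\alpha,\gamma}$; this is a $K$-open subscheme, and by construction its $K$-points are precisely those tuples whose images in $V'$ are $K'$-linearly independent.

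It remains to show $U$ is non-empty, which amounts to exhibiting an $\alpha$ such that $P_\alpha$ is a nonzero polynomial in $K'[a_{i,j}]$. This is exactly where the hypotheses enter: since $V_{K'} \to V'$ is surjective and $\dim_{K'} V' \ge n$, there exist vectors $v_1,\dots,v_n \in V_{K'}$ with $K'$-linearly independent images, so evaluating the minors at their coordinates exhibits some nonzero $P_\alpha$. The final sentence of the lemma then follows from the standard fact that a non-empty open subset of $\A^{nm}_K$ has a $K$-rational point whenever $K$ is infinite.
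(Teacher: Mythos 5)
Your proof is correct and takes essentially the same approach as the paper's: the paper descends the condition from $K'$ to $K$ by composing the determinant map $D$ with the Weil restriction adjunction along $K'/K$, whose coordinate description is exactly your expansion $P_\alpha = \sum_\gamma \lambda_\gamma Q_{\alpha,\gamma}$ along a $K$-basis of $K'$. The only cosmetic difference is that the paper first replaces $V'$ by an $n$-dimensional quotient so that a single determinant suffices, whereas you keep all the $n\times n$ minors.
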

\begin{proof}
Replacing $V'$ by a quotient of dimension $n$, we may assume that $\dim_{K'} V' = n$.
There is a map $D: \A(V')^n \to \A_{K'}^1$ such that $n$ elements of $V'$ are linearly independent if and only if their image under $D$ is non-zero (pick a basis of $V'$ and let $D$ be the determinant).
By adjunction the composite \[ \A(V)^n_{K'} \to \A(V')^n \xrightarrow{D} \A^1_{K'} \] defines a map $D': \A(V)^n \to R(\A^1_{K'})$, where $R$ denotes the Weil restriction along $K'/K$ (see e.g. \cite[\S7.6]{NeronModels}).
\NB{We have $R(\A^1_{K'}) \wequi \A^{[K':K]}_K$} By construction $n$ elements of $V$ have image in $V'$ linearly independent over $K'$ if and only if their image under $D'$ is non-zero.
Since $D$ is not the zero map neither is $D'$, and hence $U = D'^{-1}(R(\A^1_{K'}) \setminus 0)$ is the desired non-empty open subset.

The last statement follows since non-empty open subsets of affine space over an infinite field have rational points.
\end{proof}

\begin{lemma} \label{lemm:technical}
Let $Z$ be an affine curve over the field $F$, smooth away from a rational point $w$, and let $f: Z \to \A^1$ be nowhere constant.
There exists $u: Z \to \A^1$ such that $u(w) \ne 0$ and $fu: Z \to \A^1$ is finite.
If $F$ is infinite, it can be arranged that $u$ has no double zeros.
\end{lemma}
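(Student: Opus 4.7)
The plan is to pass to the smooth projective completion of the normalization, construct $u$ via a Riemann--Roch argument, and then obtain the no-double-zeros refinement by a Bertini-style dimension count.

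First I would let $\nu: \tilde Z \to Z$ be the normalization, which is an isomorphism off $w$; write $w_1, \dots, w_k$ for the preimages of $w$ in $\tilde Z$, let $\bar Z$ be the smooth projective completion of $\tilde Z$, and set $P = \bar Z \setminus \tilde Z = \{p_1, \dots, p_m\}$. The pullback $\tilde f := \nu^*f$ extends to $\bar f: \bar Z \to \P^1$, nonconstant on each component of $\bar Z$ because $f$ is nowhere constant. Writing $a_i := \mathrm{ord}_{p_i}(\bar f) \in \Z$, the condition that $fu: Z \to \A^1$ is finite (for $u \in \scr O(Z)$) translates into $\mathrm{ord}_{p_i}(\nu^*u) < -a_i$ for every $i$, i.e.\ that the extension $\bar Z \to \P^1$ of $\bar f \cdot \nu^* u$ sends every $p_i$ to $\infty$.

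Next I would produce $u$ of the form $u = 1 + v$ with $v \in \scr O(Z)$ vanishing to large order at every $w_j$ and satisfying $\mathrm{ord}_{p_i}(\nu^*v) < -a_i$ at $P$. To guarantee $v \in \scr O(Z)$ rather than merely $\scr O(\tilde Z)$, I would use the conductor ideal $\scr C$ of the inclusion $\scr O(Z) \hookrightarrow \nu_*\scr O(\tilde Z)$: this is an ideal of both rings supported at $w$, and it contains every element of $\scr O(\tilde Z)$ that vanishes to sufficient order at each $w_j$. Existence of such $v$ reduces to exhibiting a nonzero section of a linear system on $\bar Z$ (prescribed poles at $P$, prescribed vanishing at the $w_j$), which is given by Riemann--Roch applied component-by-component once the total pole order at $P$ is large enough relative to the required vanishing order at $\{w_j\}$ and the genera. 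Then $u := 1 + v$ has $u(w) = 1 \ne 0$, lies in $\scr O(Z)$, and makes $fu$ finite.

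For the final assertion, when $F$ is infinite the admissible $v$'s form a positive-dimensional $F$-linear subspace $V \subseteq \scr O(Z)$, of arbitrarily large dimension as the pole budget at $P$ grows. Since $u(w) = 1$, all zeros of $u = 1 + v$ lie in the smooth locus $Z \setminus \{w\}$, so a ``double zero'' of $u$ means $u$ and its differential $du \in \Omega_{Z/F}$ share a zero there. Consider the incidence $I \subset V \times (Z \setminus \{w\})$ of pairs $(v, z)$ with $u(z) = 0$ and $du(z) = 0$: once $V$ separates $1$-jets sufficiently, the fiber of $I \to Z \setminus \{w\}$ over a generic $z$ is cut out by two independent linear conditions $v(z) = -1$ and $dv(z) = 0$, giving $\dim I \le \dim V - 1$, so the image of $I$ in $V$ is a proper closed subvariety. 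Since $F$ is infinite, its $F$-rational complement is nonempty, producing the desired $v$. The main technical hurdle I anticipate is arranging the conductor condition simultaneously with the pole-order prescription at $P$ in a Riemann--Roch-compatible way; this is resolved by taking the pole-order budget large relative to the required vanishing at $\{w_j\}$.
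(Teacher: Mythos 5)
Your overall strategy --- pass to a compactification, use Riemann--Roch to manufacture a function with large poles at infinity, and get the no-double-zeros refinement by genericity --- is the same as the paper's, but two of your technical devices differ. For the descent from $\scr O(\tilde Z)$ to $\scr O(Z)$ you use the conductor ideal (imposing high vanishing at the preimages of $w$), whereas the paper never normalizes at $w$: it compactifies $Z$ itself (the added points are smooth since $Z$ is smooth away from $w$), produces $v$ regular on the normalization with a large pole at a given point at infinity, and extracts a function in $\scr O(Z)$ with a comparably large pole there as one of the coefficients of the integral equation of $v$ over $\scr O(Z)$. Both work; yours costs some Riemann--Roch budget at the $w_j$ but yields the clean normal form $u=1+v$ with $u(w)=1$. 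Your incidence-variety argument for the second claim is also sound --- note that you only need the $1$-jet evaluation $V\otimes k(\eta)\to k(\eta)^2$ to be surjective at the \emph{generic} points $\eta$ of $Z\setminus\{w\}$ (the fibres over closed points are already cut down by the single affine condition $v(z)=-1$), which holds once the pole budget is large; the paper instead first replaces $u$ by $u^n+g$ so that $du$ has finitely many zeros and then translates by a general constant.

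The one genuine soft spot is the assertion that a \emph{nonzero} section of the linear system ``with prescribed poles at $P$'' does the job. A nonzero section of $\scr O(\sum_i n_i p_i-\sum_j m_j w_j)$ only satisfies $\ord_{p_i}\ge -n_i$; it may have no pole at all at a given $p_i$, let alone one of order $>a_i$. You must choose a section avoiding the union of the subspaces $H^0(D-(n_i-a_i)p_i)$ for \emph{all} $i$ simultaneously. Over an infinite field a general section works, but the first claim of the lemma is stated over an arbitrary $F$, and over a finite field a union of proper subspaces can exhaust the whole space. This is precisely why the paper first finds, for each point $z_i$ at infinity, a separate $u_i$ with $\ord_{z_i}(u_i)+\ord_{z_i}(f)<0$ and then combines them as $u=\sum_i u_i^{e_i}$ for suitable exponents (chosen so the minimal order at each $z_i$ is attained by a unique summand, preventing cancellation). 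You can repair your version either by that combination trick or by observing that for $N$ large each offending subspace has codimension $N-a_i$ in $H^0(D_N)$, so a counting bound over $\F_q$ shows the union is proper once $N$ is large enough.
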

\begin{proof}
Let $\bar Z$ be a compactification of $Z$ which is smooth away from $w$, and $\bar Z \setminus Z = \{z_1, \dots, z_r\}$.
Since $Z$ is smooth at infinity, any map $Z \to \A^1$ extends to $\bar Z \to \P^1$.
It suffices to find $u_i: Z \to \A^1$ such that $\ord_{z_i}(u_i) + \ord_{z_i}(f) < 0$, for every $i$.
Then $u = \sum_i u_i^{e_i}$ for suitably big $e_i$ will satisfy the same condition, but for all $i$ at once.\NB{details?}
Now $uf: \bar Z \to \P^1$ is proper\NB{thus finite...} and $\bar Z \setminus Z \supset (uf)^{-1}(\infty) \supset \{z_1, \dots, z_r\}$, which implies that $uf: Z \to \A^1$ is finite (being proper and affine).
If $u(w) = 0$ then replace $u$ by $u+1$; the first claim follows.
Replacing $u$ by $u^n + g$ for suitable $g$ and $n$ large, we may assume that $du$ has only finitely many zeros.
Then $u+c$ for general $c$ has no double zeros (away from $w$) and satisfies $u(w) \ne 0$, so that if $F$ is infinite we may arrange the second claim.

Let $\tilde {\bar Z}$ be the normalization of $\bar Z$, and $\tilde Z \subset \tilde {\bar Z}$ the open subset over $Z$, i.e. the normalization of $Z$.
Note that $\tilde{\bar Z} \to \bar Z$ is an isomorphism near $z_i$.
By Riemann--Roch, we can find $v: \tilde{\bar Z} \to \P^1$ with an arbitrarily large pole at $z_i$, no poles away from $z_i$, so in particular no poles on $\tilde Z$.
Since $\tilde Z \to Z$ is integral, there exists an equation $v^n + a_1 v^{n-1} + \dots + a_n = 0$, with $a_i: Z \to \A^1$.
It follows that (at least) one of the $a_i$ must have a pole at $z_i$ (at least) as large as $v$.
This concludes the proof.
\end{proof}

We used above the following variant of the second part of Gabber's Lemma \cite[Lemma 3.1(b)]{gabber1994gersten}; our proof is heavily inspired by Gabber's.
\begin{lemma} \label{lemm:gabber'}
Let $F$ be an infinite field, $X$ smooth and affine of dimension $d$ over $F$, $Z \subset X$ closed, $w \in Z$ a rational point, $e<d$, $\phi' = (\phi_1, \dots, \phi_e): X \to \A^e$ such that $\phi'(w) = 0$, $\phi'|_Z: Z \to \A^e$ is finite and $\phi'$ is smooth at all points of $\phi'^{-1}(0) \cap Z$.

Then there exist $\phi_{e+1}, \dots, \phi_d: X \to \A^1$ such that $\phi = (\phi_1, \dots, \phi_d): X \to \A^d$ is étale at all points of $\phi'^{-1}(0) \cap Z$, $\phi(w) = 0$ and there exists an open neighborhood $0 \in W \subset \A^{e}$ such that $Z_W \to \A^{d-e}_W$ is a closed immersion.
\end{lemma}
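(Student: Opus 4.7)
The plan is to choose $\phi_{e+1}, \dots, \phi_d$ as restrictions of generic linear forms after a closed embedding $X \hookrightarrow \A^N_F$ (translated so that each $\phi_i$ vanishes at $w$), and to verify the three required properties for a sufficiently generic such choice. First I note that $S := \phi'^{-1}(0) \cap Z = \{w = w_1, \dots, w_r\}$ is finite, since $\phi'|_Z$ is finite and $0$ is closed in $\A^e$.

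For the étaleness, at each $w_i$ the hypothesis that $\phi'$ is smooth at $w_i$ gives that $d\phi_1, \dots, d\phi_e$ are linearly independent in $\Omega_{w_i}X \otimes k(w_i)$, so Lemma \ref{lemm:general-basis} applied to the surjection $(\A^N)^* \otimes k(w_i) \twoheadrightarrow \Omega_{w_i} X \otimes k(w_i) / \langle d\phi_1, \dots, d\phi_e\rangle$ shows that a generic $(d-e)$-tuple of linear forms extends the $d\phi_j$'s to a basis of the cotangent space at $w_i$. Intersecting these open conditions over the finitely many $w_i$ (and using $F$ infinite) makes $\phi$ simultaneously étale at every point of $S$. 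A further open, non-empty condition on the linear forms allows me to arrange that the values $\phi''(w_1), \dots, \phi''(w_r) \in \A^{d-e}$ are pairwise distinct, where $\phi'' := (\phi_{e+1}, \dots, \phi_d)$.

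For the closed-immersion property, set $Z_W := \phi'^{-1}(W) \cap Z$ for any open $0 \in W \subset \A^e$. Since $Z_W \to W$ is finite and $\A^{d-e}_W \to W$ is separated, the factoring map $\phi|_{Z_W} : Z_W \to \A^{d-e}_W$ is proper and quasi-finite, hence finite, for every such $W$. Next, the unramified locus of $\phi|_Z : Z \to \A^d$ is open in $Z$ and contains $S$ (the closed immersion $Z \hookrightarrow X$ pulls back the étaleness at $w_i$ to unramifiedness). Its closed complement maps under the finite (hence closed) map $\phi'|_Z$ to a closed subset $B_1 \subset \A^e$ not containing $0$, so setting $W_1 := \A^e \setminus B_1$ yields a finite, unramified map $\phi|_{Z_{W_1}}$.

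Finally, to arrange injectivity, observe that unramifiedness of $\phi|_{Z_{W_1}}$ makes the diagonal $\Delta$ clopen in $Z_{W_1} \times_{\A^d} Z_{W_1}$; let $C$ be its closed complement. The inclusion $Z_{W_1} \times_{\A^d} Z_{W_1} \hookrightarrow Z_{W_1} \times_{W_1} Z_{W_1}$ is a closed immersion (pulled back from the diagonal of $\A^{d-e}$), and its target is finite over $W_1$, so $C$ is finite over $W_1$ with closed image $B_2 \subset W_1$. The fiber of $C$ over $0$ consists of pairs $(w_i, w_j)$ with $i \neq j$ and $\phi''(w_i) = \phi''(w_j)$, which is empty by the separation condition from the first step. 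Hence $0 \notin B_2$, and setting $W := W_1 \setminus B_2$ makes $\phi|_{Z_W}$ a finite, unramified monomorphism, hence a closed immersion. The main obstacle is precisely this last step, since it requires propagating the point-by-point separation at the special fibre over $0$ to a Zariski neighborhood; the finiteness of $\phi'|_Z$ is essential here, as it forces the ``bad coincidence'' locus to be proper over $\A^e$ and thus have closed image avoiding $0$.
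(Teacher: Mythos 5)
Your choice of generic linear projections and your verification of \'etaleness at the finitely many points of $S=\phi'^{-1}(0)\cap Z$ coincide with the paper's argument (same use of Lemma \ref{lemm:general-basis} applied to $\Omega_{w_i}X/\lra{d\phi_1,\dots,d\phi_e}$). For the closed-immersion part you replace the paper's Nakayama spreading-out by an explicit one (the complement of the unramified locus, and then the complement of the diagonal in $Z_{W_1}\times_{\A^d}Z_{W_1}$, are finite over the base, hence have closed images missing $0$); that mechanism is fine, and both routes correctly reduce everything to the fibre of $\phi|_Z$ over $0\in\A^e$.

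There is, however, a genuine gap in your analysis of that fibre. You claim that the fibre of $C$ over $0$ consists of the pairs $(w_i,w_j)$ with $i\ne j$ and $\phi''(w_i)=\phi''(w_j)$, and you only impose the genericity condition that the images $\phi''(w_1),\dots,\phi''(w_r)$ be pairwise distinct. But the points $w_i\ne w$ of $S$ need not be $F$-rational, and for a non-rational $w_i$ the scheme $w_i\times_{\A^{d-e}}w_i=\operatorname{Spec}\bigl(k(w_i)\otimes_{k(\phi''(w_i))}k(w_i)\bigr)$ is strictly larger than the diagonal whenever $k(\phi''(w_i))\subsetneq k(w_i)$ (the extension is separable since $\phi'^{-1}(0)$ is smooth at $w_i$, so the tensor product splits into several points). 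These off-diagonal points lie in $C$ and sit over $0$, so $0\in B_2$ and your shrinking fails: with only ``pairwise distinct images'' the map $S\to\A^{d-e}$ need not be a monomorphism. Concretely, if $\phi''$ happened to be constant on $w_i=\operatorname{Spec}(F(\alpha))$ with $\alpha\notin F$, the fibre over $\phi''(w_i)$ would contain $w_i$ with a nontrivial residue extension. The fix is exactly the stronger genericity condition the paper imposes: the linear forms must \emph{separate the geometric points} of $S$, i.e.\ besides separating distinct $w_i,w_j$ they must induce isomorphisms $k(\phi''(w_i))\cong k(w_i)$ for each $i$ (equivalently, $S\to\A^{d-e}$ is radicial, hence together with unramifiedness and properness a closed immersion). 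This is again a non-empty open condition since each $k(w_i)/F$ is finite separable, so your argument goes through once the condition is strengthened; as stated, though, the verification that $0\notin B_2$ is incorrect.
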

Note that the new map $\phi$ is also finite when restricted to $Z$.
\begin{proof}
Let $X \hookrightarrow \A^N$ with $w$ mapped to $0$.  
We claim that general linear projections $\phi_{e+1}, \dots, \phi_d$ have the desired properties.
They vanish on $w$ by definition.

In order for $\phi$ to be smooth at some point $x \in \phi'^{-1}(0) \cap Z$, we need only ensure that $d\phi_1, \dots, d\phi_d \in \Omega_x X$ are linearly independent \cite[17.11.1]{EGAIV}.
Since $\phi'$ is smooth at $x$, the $d\phi_1, \dots, d\phi_e$ are linearly independent at $x$, and then the other $d\phi_i$ are linearly independent, for general $\phi_i$; this follows from Lemma \ref{lemm:general-basis} applied to $V' = \Omega_x X/\lra{d\phi_1, \dots, d\phi_e}$. 
Since $\phi'^{-1}(0) \cap Z$ is a finite set of points, the étaleness claim holds for general $\phi_i$\NB{étale = smooth + relative dimension 0}.

It remains to prove the claim about the closed immersion.
Note that by Nakayama's lemma, if $f: X \to Y$ is a morphism of affine $S$-schemes with $X$ finite over $S$, $S$ noetherian, and there exists $s \in S$ such that $f_s: X_s \to Y_s$ is a closed immersion, then there exists an open neighborhood $U$ of $s$ such that $f_U: X_U \to Y_U$ is a closed immersion.\NB{ref? Let $S=Spec(R), X=Spec(B), Y=Spec(A)$. Since $A/m \to B/m$ is surjective, $A_m \to B_m$ hits generators of $B_m$ as an $R_m$-module by Nakayama, whence the result.}
Let $\psi: Z \to \A^d$ be the restriction of $\phi$, which we view as a morphism over $S=\A^e$ via $\phi'$ (and the projection $\A^d \to \A^e$ to the first $e$ coordinates).
It is thus enough to show that $\psi_0: \phi'^{-1}(0) \cap Z \to \A^{d-e}$ is a closed immersion (for general $\phi_i$)
Since $\phi$ is étale at all points of $\phi'^{-1}(0) \cap Z$ (for general $\phi_i$) and $Z \to X$ is a closed immersion, $\psi$ is unramified at all points above $0$ and so $\psi_0$ is unramified (for general $\phi_i$).
Since $\phi: Z \to \A^d$ is finite so is $\psi$; in fact $\phi'^{-1}(0) \cap Z$ is finite over $F$.
By \cite[Tags 04XV and 01S4]{stacks}, a morphism is a closed immersion if and only if it is proper, unramified and radicial; we already know that $\psi_0$ is finite (hence proper) and unramified.
Being radicial is fpqc local on the target \cite[Tag 02KW]{stacks}, so may be checked after geometric base change.
In other words (using that $\phi'^{-1}(0) \cap Z$ is finite over $F$) we need the $\phi_i$ to separate a finite number of specified geometric points.
This clearly holds for general $\phi_i$.\NB{Since $e<d$!}
\end{proof}

\begin{proof}[Proof of Theorem \ref{thm:formula}]
If the theorem holds for composable maps $f$ and $g$, then it holds for $fg$.
Given $f: Y \to X$, we factor it as \[ Y \xrightarrow{i} \Gamma_f \xrightarrow{p} X; \] here $i: Y \to \Gamma_f$ is the graph of $f$.
Then $i$ is a regular immersion and $p$ is smooth.
It follows that $p^{-1}(Z) \subset \Gamma_f$ has codimension $\ge d$ (see e.g. \cite[Corollary 6.1.4]{EGAIV}).
Hence it suffices to prove the result for $i, p$ separately; i.e. we may assume that $f$ is either a regular immersion or a smooth morphism.
The case of smooth maps was already explained in Remark \ref{rmk:RS-basic}, so assume that $f: Y \hookrightarrow X$ is a regular immersion.
As usual we may localize in a generic point of $Z \cap Y$ of codimension $d$ on $Y$; hence we may assume that $Z \cap Y = \{z\}$ is a closed point, $X$ is local and $\dim Y = d$.
It follows (e.g. from \cite[Tag 00NQ]{stacks}) that there exists a sequence of codimension $1$ embeddings of essentially smooth schemes \[ Y = Y_d \hookrightarrow Y_{d+1} \hookrightarrow \dots \hookrightarrow Y_n = X. \]
Let $Z_i = Z \cap Y_i$.
Then $\dim Z_i \ge \dim Z_{i+1}-1$ \cite[Tag 00KW]{stacks} but $\dim Z_n = n-d = \dim Z_d + n-d$ so that $\dim Z_i = i-d$ for all $i$.
It follows that we may prove the result for each $Y_i \hookrightarrow Y_{i+1}$ separately; this is Lemma \ref{lemm:key}.
\end{proof}

\section{Applications} \label{sec:app}
After introducing some notation in \S\ref{subsec:app-notation}, we identify the heart of $\SHS(k)(d)$ (for $d \ge 3$) in \S\ref{subsec:app-heart}.
This establishes \cite[Conjecture 6.10]{bachmann-moving}.
Finally in \S\ref{subsec:app-res} we study convergence of the resolution of an $S^1$-spectrum by infinite $\P^1$-loop spectra arising from the adjunction $\SHS(k) \adj \SH(k)$ and deduce some conservativity results.

We also assume that $k$ is perfect; we will restate this assumption with the most important results only.

\subsection{Notation and hypotheses} \label{subsec:app-notation}
We write $\SHS(k)$ for the category of motivic $S^1$-spectra \cite[\S4]{morel-trieste} (i.e. the motivic localization of the category of spectral presheaves on $\Sm_k$), and $\SH(k) = \SHS(k)[\Gmp{-1}]$ for the category of motivic spectra \cite[\S5]{morel-trieste}.
We use the following notation for the stabilization functors \[ \Sm_{k*} \xrightarrow{\Sigma^\infty_{S^1}} \SHS(k) \xrightarrow{\sigma^\infty} \SH(k)^\eff \subset \SH(k), \] and we denote by $\omega^\infty$ the right adjoint of $\sigma^\infty$.
Here $\SH(k)^\eff$ is the localizing subcategory generated by the image of $\sigma^\infty$.

There are localizing subcategories \[ \SHS(k) \supset \SHS(k)(1) \supset \dots \supset \SHS(k)(d) \supset \dots; \] here $\SHS(k)(d)$ is generated by $\Sigma^\infty_{S^1} X_+ \wedge \Gmp{d}$ for $X \in \Sm_k$.
The inclusion $\SHS(k)(d) \subset \SHS(k)$ has a right adjoint which we denote by $f_d$.\footnote{We abuse notation somewhat and view this as a functor $\SHS(k) \to \SHS(k)$.}
There are canonical cofiber sequences $f_{d+1} \to f_d \to s_d$ defining the functors $s_d$.
There is a similar filtration of $\SH(k)^\eff$, given by $\SH(k)^\eff(d) := \SH(k)^\eff \wedge \Gmp{d}$, and the right adjoints (respectively cofibers) are again denoted by $f_d$ (respectively $s_d$).
See \cite{levine2008homotopy,voevodsky-slice-filtration} or \cite[\S6.1]{bachmann-moving} for more details on these functors.

Recall that $\SHS(k)$ has a $t$-structure with non-negative part generated by $\Sigma^\infty_{S^1} X_+$ for $X \in \Sm_k$; its heart canonically identifies with $\HI(k)$ \cite[Lemma 4.3.7(2)]{morel-trieste}.
We denote by $E_{\ge 0}, E_{\le 0}$ and $\ul{\pi}_0 E$ the truncations and homotopy sheaves, respectively.
The categories $\SHS(k)(d)$, $\SH(k)$, $\SH(k)^\eff(d)$ have related $t$-structures, with non-negative parts generated by $X_+ \wedge \Gmp{d}$.

Recall from \cite[\S5.1]{bachmann-moving} the notion of a presheaf with $\A^1$-transfers.
This is just a presheaf $F$ on $\Sm_k$ together with for every finitely generated field $K/k$ a $GW(K)$-module structure on $F(K)$, and for every finite monogeneic extension $K(x)/K$ a transfer $\tau_x: F(K(x)) \to F(K)$.
The category $\SH(k)^{\eff\heart}$ embeds fully faithfully into the category of presheaves with $\A^1$-transfers \cite[Corollary 5.17]{bachmann-moving} (morphisms in this category are given by morphisms of presheaves compatible with the $GW$-module structures and transfers).
Given a presheaf with $\A^1$-transfers $M$, we say that \emph{the transfers extend to framed transfers} if $M$ is in the essential image of this embedding.
Recall also that Morel has shown that if $M \in \HI(k)$ and $d>0$, then $M_{-d}$ canonically extends to a presheaf with $\A^1$-transfers (see \S\ref{subsec:transfer} or \cite[Example 5.2]{bachmann-moving}).
\begin{definition}
Let $k$ be a perfect field and $d>0$.
\begin{enumerate}
\item Let $M \in \HI(k)$.
  We shall say that hypothesis $T_d(M)$ holds if the canonical $\A^1$-transfers on $M_{-d}$ extend to framed transfers.
  We shall say that hypothesis $T_d(k)$ holds if $T_d(M)$ holds for all $M \in \HI(k)$.
\item We shall say that hypothesis $S_d(k)$ holds if for any $E \in \SHS(k)$ and $i \in \Z$ the spectrum $f_d \ul{\pi}_i s_d E$ is in the essential image of $\omega^\infty: \SH(k) \to \SHS(k)$.
\end{enumerate}
\end{definition}

\begin{remark}
If $k$ is perfect then $T_d(k)$ holds for any $d \ge 3$, and if $char(k) = 0$ then $T_2(k)$ also holds \cite[Theorem 5.19]{bachmann-moving}.
\end{remark}
\begin{remark} \label{rmk:speculation}
We speculate that $T_1(k)$ holds for any perfect field.
\end{remark}
\begin{remark} \label{rmk:Td-bc}
If $f: Spec(l) \to Spec(k)$ is an algebraic extension (automatically separable) and $M \in \HI(k)$ such that $T_d(M)$ holds, then also $T_d(f^*M)$ holds.
This is obvious for $f$ finite, and the general case follows by continuity and essentially smooth base change.
\end{remark}

\begin{theorem}[Levine \cite{levine2010slices}]
Let $char(k) = 0$.
Then $S_d(k)$ holds for any $d>0$.
\end{theorem}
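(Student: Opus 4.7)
The plan is to reduce $S_d(k)$ to the main theorem of \cite{levine2010slices}, which in characteristic zero identifies the $S^1$-slices $s_d E$ with $\omega^\infty$ applied to the corresponding $\P^1$-slices. Once this identification is in hand, both the formation of $\ul{\pi}_i$ and the application of $f_d$ commute with $\omega^\infty$ for formal reasons, so the property of lying in the essential image of $\omega^\infty$ is preserved at every step.

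First I would invoke Levine's theorem to produce, for any $E \in \SHS(k)$, an object $F \in \SH(k)^\eff(d)$ together with a natural equivalence $s_d E \wequi \omega^\infty F$ in $\SHS(k)$. This is the substantive input and is where the characteristic zero hypothesis is genuinely needed: it enters via resolution of singularities in Levine's construction of the $S^1$-slice functor through the homotopy coniveau tower.

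Next I would verify that the adjunction $\sigma^\infty \dashv \omega^\infty$ between $\SHS(k)$ and $\SH(k)^\eff$ restricts to an adjunction between $\SHS(k)(d)$ and $\SH(k)^\eff(d)$: this is immediate because $\sigma^\infty$ carries the generators $\Sigma^\infty_{S^1} X_+ \wedge \Gmp{d}$ of $\SHS(k)(d)$ to the generators of $\SH(k)^\eff(d)$. Passing to right adjoints in the resulting commuting square of inclusions then yields the interchange $\omega^\infty f_d \wequi f_d \omega^\infty$. For the interaction with $\ul{\pi}_i$, I would invoke that both relevant $t$-structures are defined by Nisnevich sheaves of stable homotopy groups of the underlying presheaf of spectra, and that $\omega^\infty$ does not alter these sheaves; hence $\ul{\pi}_i \omega^\infty F \wequi \omega^\infty \ul{\pi}_i F$ functorially, where on the right $\ul{\pi}_i F$ is regarded as an object of $\SH(k)^\eff$ via its $t$-structure.

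Assembling these three pieces gives $f_d \ul{\pi}_i s_d E \wequi f_d \omega^\infty \ul{\pi}_i F \wequi \omega^\infty f_d \ul{\pi}_i F$, which exhibits the desired spectrum in the essential image of $\omega^\infty$. The single genuine obstacle is the first step --- Levine's deep theorem on the $S^1$-slice filtration --- while the remaining manipulations are formal bookkeeping about the adjunction and the $t$-structures; the failure of Step 1 in positive characteristic is precisely why this theorem is stated only over a field of characteristic zero.
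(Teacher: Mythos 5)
There is a genuine gap, and it sits exactly at the step you call ``the substantive input.'' You assume that Levine's theorem hands you the slice $s_d E$ itself as $\omega^\infty F$ for some $F \in \SH(k)^\eff(d)$. That is not what \cite{levine2010slices} proves, and it is a strictly stronger statement than $S_d(k)$: if $s_dE$ were in the essential image of $\omega^\infty$, the hypothesis $S_d(k)$ would simply have been formulated that way, rather than in terms of the auxiliary objects $f_d\ul{\pi}_i s_d E$. What Levine's Theorem~2 actually supplies is that the \emph{bigraded} layers $s_{p,n}E$ --- defined via his main construction as the realization of $\Sigma^p\pi_p(s_nE)^{(n)}(X,\bullet)$, where $(s_nE)^{(n)}(X,\bullet)$ is the homotopy coniveau tower model of $s_nE$ --- lift to $\SH(k)$. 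The entire content of the paper's proof, which your proposal omits, is the identification
\[ s_{p,n}E \wequi f_n\Sigma^{p+n}\ul{\pi}_{p+n}s_nE, \]
which converts Levine's statement about $s_{p,n}E$ into the statement $S_n(k)$ about $f_n\ul{\pi}_i s_nE$.

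This identification is not formal bookkeeping. One has to show that the homotopy coniveau construction applied to the truncation $(s_nE)_{\ge p+n}$ agrees with the levelwise Postnikov truncation $\tau_{\ge p}$ of the simplicial spectrum $(s_nE)^{(n)}(\ph,\bullet)$; this reduces to a vanishing statement $H^i_W(\A^m_X,\ul{\pi}_j s_nE)=0$ for $i\ne n$ and $W$ of codimension $\ge n$, which is proved via the descent spectral sequence together with the Rost--Schmid resolution and the vanishing of the contractions $\ul{\pi}_j(s_nE)_{-i}$ for $i>n$. None of this appears in your argument. Your later manipulations (that $\omega^\infty f_d\wequi f_d\omega^\infty$, and that $\omega^\infty$ is $t$-exact so commutes with $\ul{\pi}_i$ --- the latter needs the cited right-$t$-exactness plus left-$t$-exactness by adjunction, not just ``$\omega^\infty$ does not alter the sheaves'') would indeed finish the proof \emph{if} Step~1 held in the form you state it; but as written, Step~1 begs the question.
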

\begin{proof}
This is essentially \cite[Theorem 2]{levine2010slices}; we just have to show that \[ s_{p,n} E \wequi f_n \Sigma^{p+n} \ul{\pi}_{p+n} s_n E. \]
By definition \cite[main construction (9.2)]{levine2010slices}, $s_{p,n} E(X)$ is the realization of (a rectification of) the simplicial object $\Sigma^p\pi_p (s_n E)^{(n)}(X, \bullet)$; here $(s_n E)^{(n)}(X, \bullet)$ is the homotopy coniveau tower model of $f_n s_n E \wequi s_n E$, and $\pi_p$ just means taking the $p$-th Eilenberg-MacLane spectrum of the (levelwise) ordinary spectrum $(s_n E)^{(n)}(X, \bullet)$.

The map $(s_n E)_{\ge n+p} \to s_n E$ of spectral sheaves induces a map \[ \alpha_p: ((s_n E)_{\ge p+n})^{(n)}(\ph, \bullet) \to (s_n E)^{(n)}(\ph, \bullet) \] of simplicial spectral presheaves.
I claim that $\alpha_p$ induces an isomorphism on $\pi_i$ for $i \ge p$, and that the source has $\pi_i = 0$ for $i < p$.
This yields an equivalence \[ ((s_n E)_{\ge p+n})^{(n)}(\ph, \bullet) \wequi \tau_{\ge p}(s_n E)^{(n)}(\ph, \bullet), \] where $\tau_{\ge p}$ just means levelwise truncation of the simplicial presheaf of spectra.
Taking cofibers we obtain \[ (\Sigma^{p+n} \ul{\pi}_{p+n} s_n E)^{(n)}(\ph, \bullet) \wequi \Sigma^p \pi_p(s_n E)^{(n)}(\ph, \bullet), \] which is what we set out to prove (using \cite[Theorem 7.1.1]{levine2008homotopy}).

It is hence enough to prove the claim.
Thus let $X \in \Sm_k$ and $W \subset \A^m_X$ have codimension $\ge n$.
The definition of the homotopy coniveau tower (recalled e.g. in \cite[\S9]{levine2010slices}) implies that it is enough to show that \[ H^{-i}_W(\A^m_X, s_n E) \wequi H^{-i}_W(\A^m_X, (s_n E)_{\ge n+p}) \text{ for } i\ge p \] and \[ H^{-i}_W(\A^m_X, (s_n E)_{\ge n+p}) = 0 \text{ for } i < p. \]
Considering the (strongly convergent) descent spectral sequence (for $F \in \SHS(k)$)\NB{ref?} \[ H^p_W(\A^m_X, \ul{\pi}_q F) \Rightarrow H^{p-q}_W(\A^m_X, F), \] for this it suffices to show that for $j \in \Z$ we have \[ H^i_W(\A^m_X, \ul{\pi}_j s_n E) = 0 \text{ for } i \ne n. \]
We can compute this cohomology group using the Rost--Schmid resolution; since $W$ has codimension $\ge n$ the vanishing follows from the observation that $\ul{\pi}_j(s_n E)_{-i} = 0$ for $i > n$, which holds since $\Omega_{\Gm}^{i} s_n E \wequi 0$ (for $i>n$), by definition.
\end{proof}

\subsection{The heart of $\SHS(k)(d)$} \label{subsec:app-heart}
Consider the adjunction \[ \sigma^\infty: \SHS(k) \adj \SH(k): \omega^\infty. \]
Then $\sigma^\infty(\SHS(k)(d)) \subset \SH(k)^\eff(d)$ and $\sigma^\infty(\SHS(k)(d)_{\ge 0}) \subset \SH(k)^\eff(d)_{\ge 0}$, for any $d \ge 0$.
Moreover it follows from \cite[Lemmas 6.1(2) and 6.2(1,2)]{bachmann-moving} that $\omega^\infty(\SH(k)^\eff(d)) \subset \SHS(k)(d)$ and $\omega^\infty(\SH(k)^\eff(d)_{\ge 0}) \subset \SHS(k)(d)_{\ge 0}$.
This implies that there is an induced adjunction \[ \pi_0^d \sigma^\infty: \SHS(k)(d)^\heart \adj \SH(k)^\eff(d)^\heart: \omega^\infty, \] where $\pi_0^d$ denotes the truncation functor in the $t$-structure on $\SH(k)^\eff(d)$.
\begin{theorem} \label{thm:hearts}
Let $k$ be a perfect field such that $T_d(k)$ holds.
Then the functor $\omega^{\infty}: \SH(k)^\eff(d)^\heart \to \SHS(k)(d)^\heart$ is an equivalence of categories.
\end{theorem}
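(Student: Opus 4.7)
The plan is to identify both hearts with equivalent subcategories of presheaves with transfers. On the right, by \cite[Corollary 5.17]{bachmann-moving} the category $\SH(k)^\eff(d)^\heart$ embeds fully faithfully into the category of presheaves with framed transfers, and by \cite[Theorem 5.19]{bachmann-moving} its essential image consists of those presheaves whose underlying $\HI$-sheaf has the form $M_{-d}$ for some $M \in \HI(k)$. On the left, the functor $\Omega_\Gm^d: \SHS(k)(d)^\heart \to \SHS(k)^\heart \simeq \HI(k)$ is monadic, and the results of \cite{bachmann-moving} identify the monad structure with the data of closed pullback maps $f^*: H^d_Z(X, M) \to H^d_{f^{-1}(Z)}(Y, M)$, with appropriate compatibilities, for every morphism $f: Y \to X$ of smooth $k$-schemes and every closed $Z \subset X$ of codimension $\ge d$ such that $f^{-1}(Z)$ also has codimension $\ge d$ on $Y$.

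Theorem \ref{thm:formula} is precisely the statement that each such pullback map depends only on $M_{-d}$ together with its canonical $\A^1$-transfer structure. This should yield a fully faithful embedding of $\SHS(k)(d)^\heart$ into the category of pairs $(M, \tau)$, where $M \in \HI(k)$ and $\tau$ is an $\A^1$-transfer structure on $M_{-d}$. Hypothesis $T_d(k)$ asserts that the forgetful functor from framed transfers to $\A^1$-transfers is an equivalence on the relevant essential image. Combining this with the description of $\SH(k)^\eff(d)^\heart$ above produces an equivalence between the two hearts; naturality of all the constructions should force it to coincide with $\omega^\infty$, up to canonical natural isomorphism.

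The main obstacle I expect is not the formal equivalence but the rigorous passage from the monad structure on $\SHS(k)(d)^\heart$ to the closed pullback data, which involves matching higher coherences between monadic structure and pullback compatibilities. The introduction signals that the authors sidestep this by following a slightly different strategy; a natural version of that is to verify directly that the unit and counit of $\pi_0^d\sigma^\infty \dashv \omega^\infty$ are isomorphisms, testing on the canonical generators $X_+ \wedge \Gmp{d}$. For full faithfulness, one reduces via the Rost--Schmid complex to showing that morphisms in $\SHS(k)(d)^\heart$ are detected by their effect on $M_{-d}$ as an $\A^1$-transfer presheaf, which is essentially Theorem \ref{thm:formula}. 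For essential surjectivity, one uses $T_d(k)$ together with \cite[Theorem 5.19]{bachmann-moving} to lift any object of $\SHS(k)(d)^\heart$ back through $\omega^\infty$.
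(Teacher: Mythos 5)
Your proposal follows the heuristic sketch from the introduction (monadicity of $\Omega_\Gm^d$ on the heart, identification of the monad with closed-pullback data, then Theorem \ref{thm:formula}), which is precisely the route the paper says it does \emph{not} take because ``making these arguments precise requires some further effort.'' You correctly flag the coherence problem yourself, but your fallback --- ``verify directly that the unit and counit are isomorphisms, testing on the canonical generators'' --- does not work as stated: essential surjectivity of $\omega^\infty$ is not a statement about generators, and checking the unit on $X_+\wedge\Gmp{d}$ tells you nothing about whether an arbitrary object $M$ of $\SHS(k)(d)^\heart$ lifts. The paper's actual structure is: full faithfulness is simply quoted from \cite[Theorem 6.9]{bachmann-moving} (it is not derived from Theorem \ref{thm:formula} at all), and the entire content of the proof is essential surjectivity.

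The missing mechanism for essential surjectivity is Lemma \ref{lemm:main}, which rests on Levine's identification $f_d M \wequi M^{(d)}$ with the homotopy coniveau tower and the BLRS complex: that complex is a model for $f_d M$ built out of exactly the pullback maps $f^*\colon H^d_Z(X,M)\to H^d_{f^{-1}(Z)}(Y,M)$, so Theorem \ref{thm:formula} upgrades ``the pullbacks depend only on $M_{-d}$ with its $\A^1$-transfers'' to ``$f_d M$ depends only on $M_{-d}$ with its $\A^1$-transfers.'' Given $M$ in the heart, one has $M\wequi f_d\ul{\pi}_0 M$; hypothesis $T_d$ produces $\tilde M\in\SH(k)^{\eff\heart}$ with $\omega^\infty(\tilde M)_{-d}\wequi \ul{\pi}_0(M)_{-d}$ as presheaves with $\A^1$-transfers; Lemma \ref{lemm:main} then gives $f_d\ul{\pi}_0 M\wequi f_d\omega^\infty\tilde M\wequi\omega^\infty f_d\tilde M$. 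Your proposal never invokes $f_d$, $M^{(d)}$, or anything playing this role, so there is no bridge from ``depends only on $M_{-d}$'' to an actual equivalence of spectra. You also miss that Levine's theorem requires the base field to be infinite, forcing the paper to treat finite $k$ by a separate descent argument along infinite pro-$p$ and pro-$q$ extensions (Lemma \ref{lemm:conservative} together with Remark \ref{rmk:Td-bc}); without this the argument is incomplete for finite fields.
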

This establishes \cite[Conjecture 6.10]{bachmann-moving} (for $n=d$).
\begin{proof}
The functor is fully faithful by \cite[Theorem 6.9]{bachmann-moving}; it hence suffices to prove essential surjectivity.
We shall prove the following more precise statement: if $M \in \HI(k)$ and $T_d(M)$ holds, then $M$ is in the essential image of $\omega^\infty$.

We first prove this assuming that $k$ is infinite.
We have $\ul{\pi}_i(M)_{-d} = 0$ for $i \ne 0$ \cite[Lemma 6.2(3)]{bachmann-moving} and hence the canonical map $M \to f_d \ul{\pi}_0 M$ is an equivalence (indeed it induces an equivalence on $\ul{\pi}_i(\ph)_{-d}$ for every $i$, and this detects equivalence in $\SHS(k)(d)$ by \cite[Lemma 6.1(1)]{bachmann-moving}).
By assumption, the $\A^1$-transfers on $\ul{\pi}_0(M)_{-d}$ extend to framed transfers; there hence exists $\tilde M \in \SH(k)^{\eff\heart}$ such that $\omega^\infty(\tilde M)_{-d} \wequi \ul{\pi}_0(M)_{-d}$ as presheaves with $\A^1$-transfers.\NB{i.e. framed sheaves extend to homotopy modules}
By Lemma \ref{lemm:main} below (this is where we use the assumption that $k$ is infinite), this implies that $f_d \omega^\infty(\tilde M) \wequi f_d \ul{\pi}_0(M)$.
It follows from \cite[Theorem 9.0.3]{levine2008homotopy} that $f_d$ commutes with $\omega^\infty$; we thus find that \[ M \wequi f_d \ul{\pi}_0 M \wequi f_d \omega^\infty(\tilde M) \wequi \omega^\infty f_d \tilde M. \]
The claim is thus proved for $k$ infinite.

Now let $k$ be finite and $M \in \HI(k)$ such that $T_d(M)$ holds.
Since $\omega^\infty$ is fully faithful, $M$ is in the essential image of $\omega^\infty$ if and only if the canonical map $M \to \omega^\infty \sigma^{\infty\heart} M$ is an isomorphism.
The functors $\omega^\infty$ and $\sigma^{\infty\heart}$ commute with essentially smooth base change.
Let $f: Spec(l) \to Spec(k)$ be an infinite algebraic $p$-extension of $k$, for some prime $p$.
Using Lemma \ref{lemm:conservative} below we reduce to proving that $f^*M$ is in the essential image of $\omega^\infty$.
By Remark \ref{rmk:Td-bc}, $T_d(f^*M)$ holds, and thus we are reduced to what was already established.

This concludes the proof.
\end{proof}

\begin{lemma} \label{lemm:main}
Let $k$ be an infinite perfect field, $M, N \in \HI(k)$ and $d>0$.
Suppose that $T_d(M)$ holds.
Any isomorphism $M_{-d} \wequi N_{-d}$ respecting the $\A^1$-transfers yields an equivalence $f_d M \wequi f_d N$.
\end{lemma}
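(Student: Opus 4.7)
The strategy is to apply the main result of \cite{bachmann-moving}, which characterizes $f_d M \in \SHS(k)(d)$ for $M \in \HI(k)$ in terms of the pair consisting of $M_{-d}$ as a presheaf with $\A^1$-transfers together with \emph{closed pullback data}, namely the maps $f^*: H^d_Z(X, M) \to H^d_{f^{-1}(Z)}(Y, M)$ of Theorem \ref{thm:formula} (for $f: Y \to X$ in $\Sm_k$ with $Z \subset X$ of codimension $\ge d$ and $f^{-1}(Z)$ also of codimension $\ge d$), subject to appropriate naturality and compatibility axioms. Combined with Theorem \ref{thm:formula}, such a characterization immediately implies that the equivalence class of $f_d M$ depends on $M_{-d}$ only through its structure as a presheaf with $\A^1$-transfers.

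More concretely, I would first observe that the isomorphism $\phi: M_{-d} \wequi N_{-d}$ respecting $\A^1$-transfers, together with $T_d(M)$, implies $T_d(N)$, so both $M$ and $N$ enter the \cite{bachmann-moving} framework on an equal footing. I would then invoke the identification of $f_d M$ in terms of closed pullback data (via, e.g., \cite[Remark 4.17]{bachmann-moving} or a close variant): to construct an equivalence $f_d M \wequi f_d N$ it suffices to produce an isomorphism $M_{-d} \wequi N_{-d}$ of presheaves with $\A^1$-transfers intertwining the closed pullback maps on the two sides. The first datum is exactly $\phi$, and the required intertwining is exactly the content of Theorem \ref{thm:formula}: $f^*$ depends only on $M_{-d}$ as a $GW$-module with monogeneic transfers, so the isomorphism on cohomology with support induced by $\phi$ via transport of structure automatically carries the closed pullbacks on $M$ to those on $N$.

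The hypothesis $T_d(M)$ is used to enter the \cite{bachmann-moving} framework, which is formulated in terms of the framed-transfer description of $\SH(k)^{\eff\heart}$ and requires the transfers on the contracted sheaf to extend to framed ones. The assumption that $k$ is infinite enters both through the moving and Gabber-type presentation lemmas in \cite{bachmann-moving} and through the proof of Theorem \ref{thm:formula} itself (see in particular Lemmas \ref{lemm:nbd} and \ref{lemm:technical}, which need an infinite base field).

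The main obstacle is not mathematical but organizational: one must extract from \cite{bachmann-moving} the precise statement that $f_d M$ is determined, up to canonical equivalence, by $M_{-d}$ (as a presheaf with $\A^1$-transfers) together with its closed pullback data, and verify that Theorem \ref{thm:formula}'s notion of ``depends only on $M_{-d}$'' matches verbatim the structure appearing in that characterization (in particular, that the naturality/compatibility axioms satisfied by the closed pullbacks of $M$ transport across $\phi$ to the corresponding axioms for $N$). Once this translation is in place, the lemma is a direct consequence of Theorem \ref{thm:formula}.
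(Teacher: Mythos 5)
Your proposal is correct and follows essentially the same route as the paper: identify $f_d M$ with its homotopy coniveau / BLRS model via \cite[Theorem 7.1.1]{levine2008homotopy} and \cite[Remark 4.17]{bachmann-moving}, note that this model depends only on $M_{-d}$ as a $GW$-module together with the closed pullback maps, and then apply Theorem \ref{thm:formula} to transport those pullbacks along $\phi$. The only slight misstatement is where the infiniteness of $k$ enters: it is needed for Levine's identification $f_d M \wequi M^{(d)}$, not for Theorem \ref{thm:formula}, whose proof only requires $k$ perfect (the base field there is enlarged to contain $k(t)$ before any moving arguments are applied).
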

\begin{proof}
We have $f_d M \wequi M^{(d)}$ \cite[Theorem 7.1.1]{levine2008homotopy} (this is where we use the assumption that $k$ is infinite).
As explained in \cite[Remark 4.17]{bachmann-moving}, the (truncated) BLRS complex of $M$ provides a model of $M^{(d)}$ which only depends on $M_{-d}$ as a presheaf of $GW$-modules together with the maps $f^*: H^d_Z(X, M) \to H^d_{f^{-1}(Z)}(Y, M)$ for $f: Y \to X \in \Sm_k$, $Z \subset X$ closed of codimension $\ge d$ such that $f^{-1}(Z)$ also has codimension $\ge d$.
(In order to apply this remark, we need to know that $M_{-d}$ has framed transfers (see e.g. \cite[Proposition 4.14]{bachmann-moving}); this is the only reason for assuming $T_d(k)$.)
Theorem \ref{thm:formula} shows that $f^*$ only depends on $M_{-d}$ as a presheaf with $\A^1$-transfers.
The result follows.
\end{proof}

\begin{lemma} \label{lemm:conservative}
Let $k$ be a perfect field and $k_p/k$ (respectively $k_q/k$) a separable algebraic $p$-extension (respectively $q$-extension), for primes $p \ne q$.
Let $\alpha: E \to F \in \SHS(k)(d)$ such that $T_d(M)$ holds for any homotopy sheaf $M$ of $E$ or $F$.\NB{probably this results holds for any $d>0$ and all $E$, no assumption on $M$, but proof slightly harder}
If the image of $\alpha$ in $\SHS(k_p)(d) \times \SHS(k_q)(d)$ is an equivalence, then so is $\alpha$.
\end{lemma}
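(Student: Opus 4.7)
The plan is to reduce the statement to a conservativity result for framed sheaves in $\SH(k)^{\eff\heart}$ and then attack that via the Euler-characteristic transfer.

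First, set $G := \operatorname{cofib}(\alpha) \in \SHS(k)(d)$. Since base change is exact, $G_{k_p} = 0 = G_{k_q}$, and it suffices to show $G = 0$. By the $t$-structure on $\SHS(k)(d)$ this reduces to $\ul{\pi}_n G = 0$ for every $n$. The long exact sequence of homotopy sheaves for $E \xrightarrow{\alpha} F \to G$ exhibits $\ul{\pi}_n G$ as a short-exact extension of $\ker(\ul{\pi}_{n-1}\alpha)$ by $\operatorname{coker}(\ul{\pi}_n\alpha)$ in $\HI(k)$. Because the contraction functor $(\ph)_{-d}$ is exact and the full subcategory of framed sheaves $\SH(k)^{\eff\heart}$ is abelian and closed under kernels, cokernels and extensions inside $\HI(k)$ (via $M \mapsto M_{-d}$), the hypothesis $T_d$ passes from the homotopy sheaves of $E,F$ to those of $G$. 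By \cite[Lemma 6.1(1)]{bachmann-moving}, $(\ph)_{-d}$ is conservative on $\SHS(k)(d)^\heart$, so the task becomes: show that a framed sheaf $N \in \SH(k)^{\eff\heart}$ with $N_{k_p} = N_{k_q} = 0$ is zero.

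For this last step I would use the Euler-characteristic transfer. For every finite subextension $k'/k$ of $k_p/k$ (necessarily of degree $p^a$), the composite $N \to N(\ph \otimes_k k') \to N$ of restriction with the transfer supplied by the framed structure on $N$ is multiplication by $\chi_{k'/k} \in GW(k) = \pi_0\1_{\SH(k)}$, an element of rank $p^a$. Any section $x \in N(K)$ vanishing in $N(K \otimes_k k')$ is therefore annihilated by $\chi_{k'/k}$. Combining the information obtained by ranging over finite subextensions of both $k_p/k$ and $k_q/k$, and exploiting $\gcd(p,q) = 1$, should force $N = 0$.

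The main obstacle is genuinely exploiting coprimality at the level of $GW(k)$ acting on $N$, rather than just at the level of ranks. The $2$-primary part is especially delicate: $GW(k)[1/2]$ is not generally $\Z[1/2]$, so elements $\chi_{k'/k}$ of unit rank need not act invertibly on $N$. I expect one must use that at least one of $p,q$ is odd (so the corresponding $\chi_{k'/k}$ have odd rank, giving cleaner invertibility properties away from $2p$ or $2q$), together with a separate argument at the prime $2$ that either invokes further structure on framed sheaves (e.g.\ their decomposition into motivic-cohomological and Witt-theoretic pieces) or uses a descent-type input from Galois/\'etale cohomology. This is where the real work of the lemma lies.
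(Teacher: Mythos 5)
Your reduction is essentially the one the paper uses, though routed slightly differently: the paper does not pass to the cofiber, but observes directly (via \cite[Lemma 6.1(1)]{bachmann-moving}) that it suffices to show each $\ul{\pi}_i(\alpha)_{-d}$ is an isomorphism, notes that by $T_d$ and the full faithfulness statement \cite[Corollary 5.17]{bachmann-moving} this is a morphism of sheaves with framed transfers preserving the transfers, and then invokes joint conservativity of base change along $k_p/k$ and $k_q/k$ for such sheaves. Your cofiber variant additionally requires checking that the property $T_d$ passes to $\ul{\pi}_n(\mathrm{cofib}(\alpha))$, i.e.\ that the essential image of $\SH(k)^{\eff\heart}$ in $\HI(k)$ is closed under kernels, cokernels and extensions; this is true but is an extra verification the paper's direct route avoids.

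The genuine gap is the final step, which you explicitly leave open. The statement you reduce to --- a sheaf with framed transfers vanishing after base change to $k_p$ and to $k_q$ is zero --- is precisely \cite[Corollary B.2.5]{EHKSY}, and the paper simply cites it rather than reproving it. So you have correctly located where the arithmetic content lives, and your diagnosis of the difficulty (the composite $\mathrm{tr}\circ\mathrm{res}$ for a degree-$p^a$ subextension acts by an element of $GW(k)$ of rank $p^a$, which need not be invertible, with the prime $2$ being the delicate case) is accurate: that is exactly what the cited result handles, using the finer structure of transfers on framed/homotopy-module sheaves together with a continuity argument over the filtered system of finite subextensions. As written, however, your proof is incomplete: the coprimality argument is sketched only at the level of ranks, and you acknowledge you do not know how to close it at the prime $2$. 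Either carry out that argument or cite \cite[Corollary B.2.5]{EHKSY} for it.
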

\begin{proof}
It suffices to prove that $\Omega^d_\Gm(\alpha)$ is an equivalence \cite[Lemma 6.1(1)]{bachmann-moving}, i.e. that $\ul{\pi}_i(\alpha)_{-d}$ is an isomorphism for all $i$.
By assumption this is a morphism between sheaves admitting framed transfers, and by \cite[Corollary 5.17]{bachmann-moving} the morphism preserves the transfers.
The result thus follows from \cite[Corollary B.2.5]{EHKSY} (using that essentially smooth base change commutes with $\Omega^d_\Gm$).
\end{proof}

The following is our degree zero $\Gm$-Freudenthal theorem.
\begin{corollary} \label{cor:freudenthal}
Suppose that $T_d(k)$ holds.
\begin{enumerate}
\item Let $E \in \SHS(k)_{\ge 0} \cap \SHS(k)(d)$.
  Then \[ \ul{\pi}_0(E) \wequi \ul{\pi}_0(\omega^\infty \sigma^\infty E). \]
\item Let $E \in \SHS(k)_{\ge 0}$.
  Then \[ \ul{\pi}_0(\omega^\infty \sigma^\infty E) \wequi \ul{\pi}_0(\Gmp{d} \wedge E)_{-d}. \]
\end{enumerate}

In particular this holds for $d \ge 3$ if $k$ is perfect, and for $d \ge 2$ if $char(k) = 0$.
\end{corollary}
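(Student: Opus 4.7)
My plan is to deduce both parts from Theorem \ref{thm:hearts}, which under $T_d(k)$ provides an equivalence of hearts $\omega^\infty: \SH(k)^\eff(d)^\heart \xrightarrow{\wequi} \SHS(k)(d)^\heart$ with inverse $\pi_0^d \sigma^\infty$. Part (1) is formal $t$-structure bookkeeping; part (2) will reduce to (1) applied to $\Gmp{d} \wedge E$ together with the interplay between $\omega^\infty$, $\Omega_\Gm$, and contraction on homotopy sheaves.

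For (1), let $E \in \SHS(k)(d)_{\ge 0}$. The inclusion $\omega^\infty(\SH(k)^\eff(d)_{\ge 0}) \subset \SHS(k)(d)_{\ge 0}$ recalled in the preamble, combined with the fact that $\omega^\infty$ is automatically left $t$-exact as the right adjoint of the right-$t$-exact $\sigma^\infty$, shows that $\omega^\infty$ is $t$-exact; in particular $\ul{\pi}_0 \omega^\infty \wequi \omega^\infty \pi_0^d$. The left adjoint $\pi_0^d \sigma^\infty$ is right $t$-exact and vanishes on $\SHS(k)(d)_{\ge 1}$, so applying it to the cofiber sequence $E_{\ge 1} \to E \to \ul{\pi}_0 E$ yields $\pi_0^d \sigma^\infty E \wequi \pi_0^d \sigma^\infty \ul{\pi}_0 E$. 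Since $\omega^\infty$ and $\pi_0^d \sigma^\infty$ are mutually inverse on hearts by Theorem \ref{thm:hearts},
\[ \ul{\pi}_0 \omega^\infty \sigma^\infty E \wequi \omega^\infty \pi_0^d \sigma^\infty \ul{\pi}_0 E \wequi \ul{\pi}_0 E, \]
and an inspection of the construction shows that the composite is the map induced by the unit $E \to \omega^\infty \sigma^\infty E$.

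For (2), I put $F := \Gmp{d} \wedge E \in \SHS(k)(d) \cap \SHS(k)_{\ge 0}$, so (1) gives $\ul{\pi}_0 F \wequi \ul{\pi}_0 \omega^\infty \sigma^\infty F$. Symmetric monoidality of $\sigma^\infty$ together with the invertibility of $\Sigma_\Gm^d$ on $\SH(k)$ gives $\sigma^\infty F \wequi \Sigma_\Gm^d \sigma^\infty E$; and $\omega^\infty$, being right adjoint to a functor that commutes with $\Sigma_\Gm^d$, commutes with $\Omega_\Gm^d$. Hence
\[ \omega^\infty \sigma^\infty E \wequi \omega^\infty \Omega_\Gm^d \Sigma_\Gm^d \sigma^\infty E \wequi \Omega_\Gm^d \omega^\infty \sigma^\infty F. \]
Morel's theory recalled in \S\ref{sec:preliminaries} yields $\ul{\pi}_0(\Omega_\Gm G) \wequi (\ul{\pi}_0 G)_{-1}$ for $G \in \SHS(k)_{\ge 0}$; iterating and taking $\ul{\pi}_0$ of the above equivalence,
\[ \ul{\pi}_0 \omega^\infty \sigma^\infty E \wequi (\ul{\pi}_0 \omega^\infty \sigma^\infty F)_{-d} \wequi (\ul{\pi}_0 F)_{-d} = \ul{\pi}_0(\Gmp{d} \wedge E)_{-d}, \]
which is (2).

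All the genuine difficulty is absorbed by Theorem \ref{thm:hearts}, which in turn rests on Theorem \ref{thm:formula}; given those, the corollary is a formal consequence. The points requiring the most care in writing this up are (i) confirming that $\pi_0^d \sigma^\infty$, restricted to $\SHS(k)(d)^\heart$, really is the inverse of $\omega^\infty$ from Theorem \ref{thm:hearts}, so that the composite isomorphism in (1) is the one induced by the unit; and (ii) the commutation $\ul{\pi}_0 \Omega_\Gm^d G \wequi (\ul{\pi}_0 G)_{-d}$ on $\SHS(k)_{\ge 0}$, a standard consequence of Morel's work that is not quite stated in the preliminaries in this form. I do not expect either to present a real obstacle.
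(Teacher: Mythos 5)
Your overall strategy is the paper's: both parts are deduced from Theorem \ref{thm:hearts} together with the right-$t$-exactness of $\sigma^\infty$ and $\omega^\infty$, and your part (2) is essentially verbatim the paper's chain of equivalences (the paper likewise invokes $\ul{\pi}_0(\Omega^d_{\Gm} G) \wequi \ul{\pi}_0(G)_{-d}$ and $\Gmp{d}\wedge E \in \SHS(k)_{\ge 0}\cap\SHS(k)(d)$ without further comment). Your concern (i) is also harmless: a fully faithful, essentially surjective right adjoint is an equivalence whose inverse is its left adjoint.

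In part (1), however, you conflate the two $t$-structures in a way that breaks the key step. The equivalence of Theorem \ref{thm:hearts} is between $\SH(k)^{\eff}(d)^\heart$ and $\SHS(k)(d)^\heart$, but $\ul{\pi}_0 E$ (truncation in the \emph{standard} $t$-structure on $\SHS(k)$) lives in $\HI(k)=\SHS(k)^\heart$ and need not lie in $\SHS(k)(d)$ at all; likewise $E_{\ge 1}$ need not lie in $\SHS(k)(d)_{\ge 1}$, so the assertion that $\pi_0^d\sigma^\infty$ ``vanishes on $\SHS(k)(d)_{\ge 1}$'' does not apply to it, and $\pi_0^d\sigma^\infty \ul{\pi}_0 E$ does not even typecheck since $\sigma^\infty \ul{\pi}_0 E$ need not be in $\SH(k)^{\eff}(d)$. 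For the same reason, what $t$-exactness of $\omega^\infty$ actually gives is $\pi_0^{d}\omega^\infty \wequi \omega^\infty \pi_0^{d}$ with the $\SHS(k)(d)$-truncation on the left, not the standard $\ul{\pi}_0$. Consequently the deduction $\omega^\infty\pi_0^d\sigma^\infty\ul{\pi}_0 E \wequi \ul{\pi}_0 E$ does not follow from Theorem \ref{thm:hearts} as written. The repair is exactly the paper's proof: first record that $E\in\SHS(k)_{\ge 0}\cap\SHS(k)(d)$ implies $E\in\SHS(k)(d)_{\ge 0}$ (this is \cite[Lemma 6.2(3)]{bachmann-moving}, not automatic); run your argument entirely with the $\SHS(k)(d)$-truncation $\pi_0^d E$ in place of $\ul{\pi}_0 E$, obtaining $\pi_0^d E \wequi \pi_0^d\omega^\infty\sigma^\infty E$; and then conclude with the bridging isomorphism $\ul{\pi}_0 \wequi \ul{\pi}_0\,\pi_0^d$ on $\SHS(k)(d)_{\ge 0}$, which holds because $\SHS(k)(d)_{\ge 1}\subset\SHS(k)_{\ge 1}$. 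This is bookkeeping rather than a new idea, but it is a step your write-up is missing.
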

\begin{proof}
(1) We have $E \in \SHS(k)(d)_{\ge 0}$ \cite[Lemma 6.2(3)]{bachmann-moving}.
Write $\pi_0^d$ for the homotopy object in the $t$-structure on $\SHS(k)(d)$.
Then since $\sigma^\infty, \omega^\infty$ are both right-$t$-exact \cite[Lemma 6.2(1,2)]{bachmann-moving} we learn from Theorem \ref{thm:hearts} that $(*)$ $\pi_0^d E \wequi \pi_0^d \omega^\infty \sigma^\infty E$.
Since $\SHS(k)(d)_{\ge 0} \subset \SHS(k)_{\ge 0}$ (by construction), we have $\ul{\pi}_0 \pi_0^d \wequi \ul{\pi}_0$ (when applied to objects in $\SHS(k)(d)_{\ge 0}$), and hence the result follows by applying $\ul{\pi}_0$ to $(*)$.

(2) We have \begin{align*} \ul{\pi}_0(\omega^\infty \sigma^\infty E) &\wequi \ul{\pi}_0(\omega^\infty \Omega_{\Gm}^d \Sigma_\Gm^d \sigma^\infty E) \\ &\wequi \ul{\pi}_0(\Omega_{\Gm}^d \omega^\infty \sigma^\infty \Sigma_\Gm^d E) \\ &\wequi \ul{\pi}_0(\omega^\infty \sigma^\infty \Gmp{d} \wedge E)_{-d} \\ &\stackrel{(1)}{\wequi} \ul{\pi}_0(\Gmp{d} \wedge E)_{-d}. \end{align*}
\end{proof}

\subsection{Canonical resolutions} \label{subsec:app-res}
In this section we will freely use the language of $\infty$-categories as set out in \cite{HTT,HA}.

Given any adjunction $F: \scr C \adj \scr D: G$ of $\infty$-categories, there is a monad structure on $GF$ \cite[Proposition 4.7.4.3]{HA}.
Hence for $E \in \scr C$ there is a canonical ``triple resolution'' $E \to E^\bullet$, where $E^\bullet$ denotes a cosimplicial object with $E^n = (GF)^{\circ (n+1)}(E)$.
In more detail, by definition of a monad, $GF$ promotes to an $\scr E_1$-algebra in $\Fun(\scr C, \scr C)$ under the composition monoidal structure, and then \cite[Construction 2.7]{mathew2017nilpotence} yields an augmented cosimplical object in $\Fun(\scr C, \scr C)$; the triple resolution is obtained by applying this cosimplicial endofunctor to $E$.
This also makes it clear that the triple resolution is functorial in $E$.

Applying this to the stabilization adjunction $\sigma^\infty: \SHS(k) \adj \SH(k): \omega^\infty$, we obtain the \emph{canonical resolution} \[ E \to E^\wedge := \lim_{\Delta}[(\omega^\infty \sigma^\infty)^{\circ (\bullet + 1)} E], \] functorially in $E$.
\begin{definition}
We shall say that \emph{the canonical resolution converges for $E$} if the above morphism is an equivalence.
\end{definition}

\begin{example} \label{ex:conv1}
The canonical resolution converges if $E$ is in the essential image of $\omega^\infty$, since then the cosimplicial object is split.\todo{ref?}
\end{example}

\begin{example} \label{ex:conv2}
Given a cofiber sequence $E_1 \to E_2 \to E_3$, if the canonical resolution converges for any two of the three terms, then it converges for the third.
This holds since all the functors involved are stable.\todo{details?}
\end{example}

The following result clearly holds in much greater generality, but for simplicity we state it in our restricted context.
\begin{lemma} \label{lemm:convergence}
Let $E \in \SHS(k)$ and suppose given a tower \[ \dots \to E_2 \to E_1 \to E_0 := E \] and a sequence $n_i \in \Z$ such that (i) $\lim_i n_i = \infty$, (ii) $E_i \in \SHS(k)_{\ge n_i}$ (for all $i$) and (iii) the canonical resolution converges for $cof(E_{i+1} \to E_i)$ (for all $i$).

Then the canonical resolution converges for $E$.
\end{lemma}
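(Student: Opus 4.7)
The plan is to show that the fiber $F(E)$ of the canonical map $E \to E^\wedge$ vanishes. Viewing $F := \mathrm{fib}(\mathrm{id}_{\SHS(k)} \to (\ph)^\wedge)$ as an endofunctor of $\SHS(k)$, convergence for $X$ is equivalent to $F(X) \simeq 0$. First I would observe that $F$ is exact: both $\sigma^\infty$ and $\omega^\infty$ are exact (as adjoints between stable $\infty$-categories), hence so is each iterate $(\omega^\infty\sigma^\infty)^{\circ(n+1)}$; the totalization $(\ph)^\wedge$ is a limit of exact functors, and limits of exact functors are exact, so $F$ (the fiber of an exact natural transformation) is exact.

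Next I would exploit hypothesis (iii). Applying the exact functor $F$ to the cofiber sequence $E_{i+1} \to E_i \to C_i$, where $C_i := \mathrm{cof}(E_{i+1} \to E_i)$, yields a cofiber sequence $F(E_{i+1}) \to F(E_i) \to F(C_i)$; since $F(C_i) \simeq 0$ by (iii), the first map is an equivalence. By induction $F(E) = F(E_0) \simeq F(E_i)$ for every $i \ge 0$.

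Finally I would control the connectivity of $F(E_i)$. Both $\sigma^\infty$ and $\omega^\infty$ are right-$t$-exact (compare the proof of Corollary \ref{cor:freudenthal} together with \cite[Lemma 6.2(1,2)]{bachmann-moving}), hence each iterate $(\omega^\infty\sigma^\infty)^{\circ(n+1)}$ preserves $\SHS(k)_{\ge m}$. The inclusion $\SHS(k)_{\ge m} \hookrightarrow \SHS(k)$ admits the right adjoint $\tau_{\ge m}$, so $\SHS(k)_{\ge m}$ is closed under limits, whence $E_i^\wedge \in \SHS(k)_{\ge n_i}$ and therefore $F(E_i) \in \SHS(k)_{\ge n_i - 1}$. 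Combined with the previous step, $F(E) \in \SHS(k)_{\ge n_i - 1}$ for every $i$; since $n_i \to \infty$ by (i), $F(E)$ has vanishing homotopy sheaves in every degree, and hence $F(E) = 0$.

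The only step requiring any care is the last implication, which uses that $\bigcap_n \SHS(k)_{\ge n} = 0$, i.e.\ that the homotopy sheaves are jointly conservative on $\SHS(k)$; this is a standard property of the homotopy $t$-structure. All other ingredients are formal manipulations with $t$-structures together with the exactness and right-$t$-exactness of $\sigma^\infty$ and $\omega^\infty$.
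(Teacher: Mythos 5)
Your skeleton agrees with the paper's up to the point where connectivity enters, but the connectivity step contains a genuine gap. You claim that $E_i^\wedge \in \SHS(k)_{\ge n_i}$ because ``the inclusion $\SHS(k)_{\ge m}\hookrightarrow\SHS(k)$ admits the right adjoint $\tau_{\ge m}$, so $\SHS(k)_{\ge m}$ is closed under limits.'' The adjoint goes the wrong way for this conclusion: a full subcategory whose inclusion admits a \emph{right} adjoint is closed under \emph{colimits}; closure under limits would require the inclusion to be a right adjoint, which is the case for $\SHS(k)_{\le m}$, not $\SHS(k)_{\ge m}$. And the conclusion is not merely badly justified but false in general: $E_i^\wedge=\lim_{\Delta}(\omega^\infty\sigma^\infty)^{\circ(\bullet+1)}E_i$ is an infinite limit (a totalization), and totalizations do not preserve connectivity --- the fiber of $\mathrm{Tot}^n\to\mathrm{Tot}^{n-1}$ is an $n$-fold loop of a piece of the $n$-th cosimplicial level, so the partial totalizations lose connectivity without bound. (Compare classical nilpotent completions of connective spectra, which need not be connective.) Hence you cannot conclude $F(E_i)\in\SHS(k)_{\ge n_i-1}$, and the final appeal to $\bigcap_n\SHS(k)_{\ge n}=0$ does not get off the ground.

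The paper circumvents exactly this problem by interchanging the two limits before estimating connectivity: since the tower $F(E_i)$ is constant, $F(E)\wequi\lim_i F(E_i)\wequi\lim_{n\in\Delta}\lim_i \mathrm{fib}\bigl(E_i\to(\omega^\infty\sigma^\infty)^{\circ(n+1)}E_i\bigr)$, and for each \emph{fixed} $n$ the fiber involves only a finite composite of the right-$t$-exact functors $\sigma^\infty,\omega^\infty$, so it genuinely lies in $\SHS(k)_{\ge n_i-1}$. One is then reduced to showing that a sequential limit of objects $X_i\in\SHS(k)_{\ge n_i}$ with $n_i\to\infty$ vanishes; this is not purely formal either, and the paper proves it via the Milnor $\lim^1$ sequence together with the vanishing $[\Sigma^\infty_{S^1}U_+,\SHS(k)_{\ge n}]=0$ for $n>\dim U$ (descent spectral sequence), which is also the correct substitute for your appeal to left-separatedness. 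To repair your argument, restructure it along these lines rather than trying to bound the connectivity of $E_i^\wedge$ itself.
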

\begin{proof}
Define an endofunctor $F$ of $\SHS(k)$ by $F(X) = fib(X \to X^{\wedge})$.
By construction this is a stable functor such that $F(X) \wequi 0$ if and only if the canonical resolution converges for $X$.
Since $F(cof(E_{i+1} \to E_i)) \wequi 0$ for all $i$, we find that the tower \[ \dots \to F(E_1) \to F(E_0) = F(E) \] is constant.
In order to prove that $F(E) = 0$ it thus suffices to show that $\lim_i F(E_i) \wequi 0$.
Commuting the limits\NB{And using that constant coaugmented cosimplicial diagrams are limits.}, we find that \[ \lim_i F(E_i) \wequi \lim_{i \ge 0, n \in \Delta} fib(E_i \to (\omega^\infty \sigma^\infty)^{\circ (n+1)} E_i) \wequi \lim_{n \in \Delta} \lim_i fib(E_i \to (\omega^\infty \sigma^\infty)^{\circ (n+1)} E_i); \] it thus suffices to show that the inner limit over $i$ vanishes.
Since $\sigma^\infty, \omega^\infty$ are both right-$t$-exact \cite[Lemma 6.2(1,2)]{bachmann-moving}, we have $fib(\dots) \in \SHS(k)_{\ge n_i - 1}$, and hence it is enough to show that if $X_i$ is a sequence of spectra with $X_i \in \SHS(k)_{\ge n_i}$ then $\lim_i X_i \wequi 0$.
Since $\SHS(k)$ is generated as a localizing subcategory by objects of the form $\Sigma^\infty_{S^1} U_+$ for $U \in \Sm_k$, considering the Milnor exact sequence \cite[Proposition VI.2.15]{goerss2009simplicial} it suffices to show: if $n > \dim U$ then $[\Sigma^\infty_{S^1} U_+, \SHS(k)_{\ge n}] = 0$.
This follows from the descent spectral sequence.\todo{details?}
\end{proof}

\begin{theorem} \label{thm:convergence}
Let $k$ be a perfect field such that $T_d(k)$ holds.
\begin{enumerate}
\item The canonical resolution converges for all $E \in \SHS(k)_{\ge 0} \cap \SHS(k)(d)$.
\item Suppose in addition that $S_j(k)$ holds for all $1 \le j < d$.
  Then the canonical resolution converges for all $E \in \SHS(k)_{\ge 0} \cap \SHS(k)(1)$.
\end{enumerate}
\end{theorem}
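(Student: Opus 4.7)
The plan is to apply Lemma \ref{lemm:convergence} in both parts, using different towers adapted to the setting.

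For part (1), I would tower $E$ using the Postnikov tower in the $t$-structure on $\SHS(k)(d)$. Set $E_i := \tau_{\ge i}^d E$. Because $E \in \SHS(k)_{\ge 0} \cap \SHS(k)(d) = \SHS(k)(d)_{\ge 0}$ by \cite[Lemma 6.2(3)]{bachmann-moving}, we have $E_0 = E$. The inclusion $\SHS(k)(d)_{\ge i} \subset \SHS(k)_{\ge i}$, which holds by construction of the $t$-structure on $\SHS(k)(d)$, supplies the connectivity hypothesis $E_i \in \SHS(k)_{\ge i}$ of Lemma \ref{lemm:convergence} with $n_i = i$. Each cofiber $\mathrm{cof}(E_{i+1} \to E_i) \simeq \Sigma^i \pi_i^d E$ is a shift of an object of $\SHS(k)(d)^\heart$, which by Theorem \ref{thm:hearts} (the only use of the hypothesis $T_d(k)$) lies in the essential image of $\omega^\infty$. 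Since $\omega^\infty$ is exact, its essential image is stable under shifts, so Example \ref{ex:conv1} gives convergence of the canonical resolution for each cofiber. Lemma \ref{lemm:convergence} then yields convergence for $E$.

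For part (2), I would use the finite slice tower
\[ f_d E \to f_{d-1} E \to \dots \to f_1 E = E, \]
whose successive cofibers are $s_j E$ for $1 \le j \le d-1$. Iteratively applying Example \ref{ex:conv2} to the cofiber sequences $f_{j+1} E \to f_j E \to s_j E$, starting from $f_d E$ and working upward through $f_{d-1} E, \dots, f_1 E = E$, it suffices to establish convergence for $f_d E$ and for each slice $s_j E$ with $1 \le j < d$. The former is an immediate instance of part (1), once one knows $f_d E \in \SHS(k)_{\ge 0}$. For the latter, I would apply Lemma \ref{lemm:convergence} to the Postnikov tower of $s_j E$ in the $t$-structure on $\SHS(k)(j)$: the $i$-th cofiber is a shift of $\pi_i^j s_j E$, which on connective objects of $\SHS(k)(j)$ is identified with $f_j \ul{\pi}_i s_j E$ via the description of the heart of $\SHS(k)(j)$. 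Hypothesis $S_j(k)$ places this in the essential image of $\omega^\infty$, so Example \ref{ex:conv1} closes the argument, and the connectivity condition is provided as in part (1) by $\SHS(k)(j)_{\ge i} \subset \SHS(k)_{\ge i}$.

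The main obstacle in both parts is not formal: it is the bookkeeping around the various $t$-structures in play. Specifically, I would need to confirm (extracting from \cite[Lemma 6.2]{bachmann-moving} and the surrounding discussion) that the slice functors $f_j$ preserve connectivity, so that both $f_d E \in \SHS(k)_{\ge 0}$ and each $s_j E \in \SHS(k)_{\ge 0} \cap \SHS(k)(j) = \SHS(k)(j)_{\ge 0}$, and that on connective objects of $\SHS(k)(j)$ the heart truncation $\pi_i^j$ coincides with $f_j \ul{\pi}_i$, without which the hypothesis $S_j(k)$ would not directly control the Postnikov cofibers of $s_j E$. Once these compatibilities are in hand, the proof is a purely formal combination of the Postnikov and slice towers with Lemma \ref{lemm:convergence}, Theorem \ref{thm:hearts}, Examples \ref{ex:conv1} and \ref{ex:conv2}, and the hypothesis $S_j(k)$.
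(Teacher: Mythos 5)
Your argument is correct and follows essentially the same route as the paper: part (1) is verbatim the paper's proof (Postnikov tower for $\tau^d_{\ge \bullet}$, Theorem \ref{thm:hearts}, Lemma \ref{lemm:convergence}), and part (2) uses the same slice-tower decomposition via Example \ref{ex:conv2} together with part (1) for $f_dE$ and a Postnikov-type tower for each $s_jE$. The one identification you flag, $\pi_i^j \wequi f_j\ul{\pi}_i$ on connective objects of $\SHS(k)(j)$, does hold (it follows from \cite[Lemmas 6.1(1) and 6.2]{bachmann-moving}), but the paper sidesteps it by taking the tower to be $E_i = f_j[(s_jE)_{\ge i}]$ outright, so that the cofibers are visibly $f_j\Sigma^i\ul{\pi}_i s_jE$ and hypothesis $S_j(k)$ applies directly.
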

\begin{proof}
(1) Write $\tau_{\ge i}^d$ for the truncation in the $t$-structure on $\SHS(k)(d)$.
We have $E \in \SHS(k)(d)_{\ge 0}$ \cite[Lemma 6.2(3)]{bachmann-moving}.
Apply Lemma \ref{lemm:convergence} with $E_i = \tau_{\ge i}^d E$ and $n_i = i$; assumption (i) is clear, and (ii) holds by \cite[Lemma 6.2(1)]{bachmann-moving}.
For (iii), it suffices by Example \ref{ex:conv1} to show that $\pi_i^d E$ is in the essential image of $\omega^\infty$.
This follows from Theorem \ref{thm:hearts}.

(2) By Example \ref{ex:conv2}, (1) and induction, it suffices to show that the canonical resolution converges for $s_j E$, for $1 \le j < d$.
Since $f_j: \SHS(k) \to \SHS(k)$ is right-$t$-exact \cite[Lemma 6.2]{bachmann-moving}, we have $s_j E \in \SHS(k)_{\ge 0}$.
Apply Lemma \ref{lemm:convergence} with $E_i = f_j[(s_j E)_{\ge i}]$ and $n_i = i$.
Assumption (i) is clear, (ii) holds by right-$t$-exactness of $f_j$, and (iii) holds by definition of $S_j(k)$ and Example \ref{ex:conv1}.
\end{proof}

\begin{corollary} \label{cor:conservative}
\begin{enumerate}
\item Suppose that $T_d(k)$ holds.
  Then \[ \sigma^\infty: \SHS(k)_{\ge 0} \cap \SHS(k)(d) \to \SH(k) \] is conservative.
\item Suppose that additionally $S_j(k)$ holds, for $1 \le j < d$.
  Then \[ \sigma^\infty: \SHS(k)_{\ge 0} \cap \SHS(k)(1) \to \SH(k) \] is conservative.
\end{enumerate}

In particular (1) holds for $d = 3$ if $k$ is perfect, and (2) holds if $char(k) = 0$.
\end{corollary}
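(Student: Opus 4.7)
My plan is to deduce both parts almost directly from Theorem \ref{thm:convergence}, by exploiting the naturality of the canonical resolution. The key observation, which I would establish first, is that since $\sigma^\infty$ and $\omega^\infty$ are functors and hence preserve equivalences, any map $\alpha \colon E \to F$ with $\sigma^\infty(\alpha)$ an equivalence in $\SH(k)$ has the property that every iterate $(\omega^\infty \sigma^\infty)^{\circ (n+1)}(\alpha)$ is an equivalence as well. Since $E \mapsto E^\wedge$ is defined as the totalization of the cosimplicial object $n \mapsto (\omega^\infty \sigma^\infty)^{\circ (n+1)}(E)$ and is functorial in $E$, the induced map $\alpha^\wedge \colon E^\wedge \to F^\wedge$ is then a limit of equivalences, hence itself an equivalence.

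For (1), I would next apply Theorem \ref{thm:convergence}(1) (using $T_d(k)$) to conclude that the unit maps $E \to E^\wedge$ and $F \to F^\wedge$ are both equivalences. The naturality square
\[
\begin{CD}
E @>>> E^\wedge \\
@V{\alpha}VV @VV{\alpha^\wedge}V \\
F @>>> F^\wedge
\end{CD}
\]
then forces $\alpha$ to be an equivalence by two-out-of-three, proving conservativity. Part (2) is the same argument, with Theorem \ref{thm:convergence}(2) replacing (1) and hence additionally invoking $S_j(k)$ for $1 \le j < d$.

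The two ``in particular'' clauses I would obtain by inserting the known cases of the hypotheses: over any perfect field $T_d(k)$ holds for $d \ge 3$, which immediately gives the first clause via (1); in characteristic zero both $T_2(k)$ and Levine's $S_j(k)$ (for all $j > 0$) are available, so we may take $d = 2$ in (2) to obtain the second clause. I do not anticipate a serious obstacle: all the nontrivial content has been packaged into Theorem \ref{thm:convergence}, and the only point that needs to be verified carefully is the genuine naturality in $E$ of the cosimplicial diagram computing $E^\wedge$, which is built into its construction in \S\ref{subsec:app-res} via \cite[Construction 2.7]{mathew2017nilpotence}.
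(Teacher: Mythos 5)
Your proposal is correct and takes essentially the same route as the paper: both deduce the corollary immediately from Theorem \ref{thm:convergence} via the observation that convergence of the canonical resolution forces conservativity. The only cosmetic difference is that you argue directly with equivalences via the naturality square for $E \mapsto E^\wedge$, whereas the paper reduces conservativity to detection of zero objects by passing to cofibers; both deductions are valid and equally short.
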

\begin{proof}
Clearly convergence of canonical resolutions implies conservativity (which is equivalent to detecting zero objects, by considering cofibers), so this is immediate from Theorem \ref{thm:convergence}.
\end{proof}

\bibliographystyle{alphamod}
\bibliography{bibliography}
\end{document}